\theoremstyle{plain}
\newtheorem{thm}{Theorem}[section]
\newtheorem{cor}[thm]{Corollary}
\newtheorem{lem}[thm]{Lemma}
\newtheorem{prop}[thm]{Proposition}
\theoremstyle{definition}
\newtheorem{exam}[thm]{Example}
\newtheorem{rem}[thm]{Remark}
\newtheorem{conj}[thm]{Conjecture}
\theoremstyle{remark}
\numberwithin{equation}{section}
\newcommand{\overbar}[1]{\mkern 1.5mu\overline{\mkern-1.5mu#1\mkern-1.5mu}\mkern 1.5mu}
\begin{document}

\title{Nordhaus-Gaddum inequality for the spectral radius of a graph of order $n$}

\author[1]{Yen-Jen Cheng \textsuperscript{\Letter}}
\affil[1]{Department of Applied Mathematics, National Pingtung University, 4-18 Minsheng Road,
Pingtung, Taiwan}

\author[2]{Chih-wen Weng}
\affil[2]{Department of Applied Mathematics, National Yang Ming Chiao Tung University, 1001 University Road, Hsinchu, Taiwan}

\maketitle

\begin{abstract}
We determine the extremal graph $G$ of order $n$ that maximizes the sum of the spectral radii of $G$ and its complement. This resolves a conjecture posed by Stevanovi\'{c} in 2007.
\end{abstract}
\bigskip

\noindent {\it Keyword:} nonnegative matrices, spectral radius, spectral bounds, Nordhaus-Gaddum type problem
\bigskip

\noindent {\it MSC:} 05C50, 15A18

\section{Introduction}

All the graphs in this paper are undirected and simple (i.e., without loops or multiple edges).
Let $G$ be a graph with vertex set $V(G)$ and edge set $E(G)$.   Unless otherwise specified, we assume $V=[n]:=\{1,2,\ldots,n\}$, where $n$ is the {\it order} of $G$. The {\it complement} of $G$, denoted by $\overbar{G}$, has vertex set  $V\left(\overbar{G}\right)=V(G)$ and $E\left(\overbar{G}\right)=\{ij~|~i, j\in V(G),~i\not=j, {\rm and}~ij\not\in E(G)\}.$ The {\it adjacency matrix} $A(G)=(a_{ij})$ of $G$ is the $n\times n$ matrix defined by $a_{ij}=1$ if $ij\in E(G)$ and $a_{ij}=0$ otherwise. The {\it spectral radius} of a nonnegative square matrix $M$, denoted by $\rho(M)$, is the maximum magnitude of the eigenvalues of $M$. The {\it spectral radius} $\rho(G)$ of a graph $G$ is defined as $\rho(A(G))$. The {\it join} of two vertex-disjoint graphs $G$ and $H$, denoted by $G\vee H$, is the graph obtained from $G$ and $H$ by adding all possible edges between $V(G)$ and $V(H)$. The join graph $K_q\vee N_{n-q}$, where $K_q$ is the complete graph of order $q$ and $N_{n-q}$ is the null graph of order $n-q$ is called a {\it complete split graph}. Its complement is the {\it disjoint union}  $N_q+ K_{n-q}$. The following conjecture was proposed by Stevanovi\'{c} in 2007.

\begin{conj}[\cite{abchrss:08,s:07}]\label{conj}
The maximum value of $\rho(G)+\rho\left(\overbar{G}\right)$ among graphs $G$ of order $n$ is attained by the complete split graph $K_{\lfloor\frac{n}{3}\rfloor}\vee N_{\lceil\frac{2n}{3}\rceil}$ and its complement. If $n\equiv 2 \ (\text{mod }  3)$, the maximum is also attained by $K_{\lceil\frac{n}{3}\rceil}\vee N_{\lfloor\frac{2n}{3}\rfloor}$ and its complement.
\end{conj}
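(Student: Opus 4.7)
My approach is to show that any extremal $G$ is forced, via a Rayleigh-switching argument on the Perron eigenvectors of $A(G)$ and $A(\overline G)$, to be a complete split graph $K_q \vee N_{n-q}$, after which a one-parameter optimization identifies the stated $q$. The plan has three stages, with Stage~2 being the main obstacle.

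\emph{Stage 1: reduction to a threshold structure.} Let $G$ be extremal and let $x,y$ be unit Perron eigenvectors of $A(G)$ and $A(\overline G)$. For any edge $ij\in E(G)$ and non-edge $k\ell\in E(\overline G)$, form $G':=G-ij+k\ell$. By the Rayleigh principle,
\begin{align*}
\rho(G')+\rho(\overline{G'}) &\ge x^{\top}A(G')x + y^{\top}A(\overline{G'})y \\
&= \rho(G)+\rho(\overline G)+2\bigl[(x_kx_\ell-x_ix_j)+(y_iy_j-y_ky_\ell)\bigr],
\end{align*}
so extremality forces $x_ix_j-y_iy_j \ge x_kx_\ell-y_ky_\ell$ whenever $ij\in E(G)$ and $k\ell\notin E(G)$. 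Introducing $t_i:=x_i/y_i$ (on vertices with $y_i>0$) and sorting the vertices in decreasing $t_i$, this separating condition on the quadratic form $x_ix_j-y_iy_j$ imposes a nested-split (threshold) structure on $G$: a clique $S$, an independent set $V\setminus S$, and nested cross-neighborhoods. The case where $\overline G$ is disconnected — relevant since the conjectured extremum has disconnected complement — is handled by applying the argument componentwise in $\overline G$.

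\emph{Stage 2: from threshold to complete split.} Within threshold graphs the objective must be further analyzed to pin down $G=K_q\vee N_{n-q}$. I would toggle the entire set of cross-edges incident to one vertex $v\in V\setminus S$ and study the effect on $\rho(G)+\rho(\overline G)$ simultaneously. Using the equitable-partition quotient matrix of a general threshold graph (which has an explicit block form when classes are ordered by degree), one writes $\rho(G)+\rho(\overline G)$ in closed form as a function of the cross-join pattern and shows that any partial join is strictly dominated by either a trivial or a complete join of $v$ to $S$. Iterating over all $v\in V\setminus S$ collapses the threshold graph to the two-class graph $K_q\vee N_{n-q}$. This is the main technical difficulty: the Stage~1 separating condition is weak, and excluding three- and four-class threshold graphs requires a careful computation that likely exploits the identity $A(G)+A(\overline G)=J-I$.

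\emph{Stage 3: optimizing $q$.} Once $G=K_q\vee N_{n-q}$, the partition $\{V(K_q),V(N_{n-q})\}$ is equitable for $A(G)$ with quotient matrix $\bigl(\begin{smallmatrix}q-1 & n-q\\ q & 0\end{smallmatrix}\bigr)$, giving
\[
\rho(G) \;=\; \tfrac{1}{2}\!\left(q-1+\sqrt{(q-1)^2+4q(n-q)}\right),\qquad \rho(\overline G) \;=\; \rho(N_q+K_{n-q}) \;=\; n-q-1.
\]
Differentiating $f(q):=\rho(G)+\rho(\overline G)$ and solving $f'(q)=0$ reduces to the quadratic $3q^2-(4n-2)q+(n^2-n)=0$ with relevant root $q^*=(2n-1-\sqrt{n^2-n+1})/3$; since $|q^*-n/3|<1$, the integer maximum lies at $\lfloor n/3\rfloor$ or $\lceil n/3\rceil$. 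Comparing $f$ at these two integers across the residue classes of $n\bmod 3$ recovers the extremal graphs listed in Conjecture~\ref{conj}, including the coincidence $f(\lfloor n/3\rfloor)=f(\lceil n/3\rceil)$ when $n\equiv 2\pmod 3$ that yields the second extremum.
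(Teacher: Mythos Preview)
Your three-stage outline matches the known architecture of the problem, but Stages~1 and~3 are already in the literature: the reduction to threshold graphs is Csikv\'ari's result (cited here as Lemma~\ref{lem2.1}), and the one-parameter optimization over $q$ is due to Aouchiche et~al.\ (cited before Proposition~\ref{prop4.3}). The entire content of the conjecture is your Stage~2, and there your proposal is not a proof but a description of what you would like to be true. A threshold graph in $\mathscr{S}_s^*(n)$ can have up to $n$ distinct degree classes, so its equitable quotient is not a small matrix with a closed-form spectral radius; there is nothing to ``write in closed form and differentiate.'' More seriously, the toggle you propose---adding or deleting all cross-edges at a single vertex $v$---moves $\rho(G)$ and $\rho(\overline{G})$ in opposite directions, and showing the sum moves the right way is precisely the conjecture. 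Your own sentence ``excluding three- and four-class threshold graphs requires a careful computation that likely exploits $A(G)+A(\overline{G})=J-I$'' is an admission that the step is missing, not a plan to fill it.

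The paper's route through Stage~2 is completely different and does not perturb $G$ at all. It replaces $\rho(A)$ by the explicit row-sum upper bound $\phi(A)=\phi_{c+1}(A)$ of Theorem~\ref{thm2.3}, for a specific index $c=c(A)$ defined from the row sums, and tracks six integer parameters $(c,v,s,\overline{c},\overline{v},\overline{s})$ of $A$ and $\overline{A}$ jointly. A counting inequality (Lemma~\ref{lem5.1}) pins $c+\overline{c}$ to $\lfloor 4n/3\rfloor$ or $\lfloor 4n/3\rfloor-1$; Lemmas~\ref{lem3.3}--\ref{lem3.5} then modify $A$ (allowing it to become non-symmetric) so as to only increase $\phi(A)+\phi(\overline{A})$ while normalizing $v$ and $\overline{v}$; finally a quartic $g(x)$ with root $\phi(A)+\phi(\overline{A})$ is shown to satisfy $g(\rho_0)\ge 0$ by a case analysis over the residue of $n\bmod 3$ and the value $c-\overline{c}$ (Proposition~\ref{prop6.5}). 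One case, $(n,c,\overline{c})=(3k{+}2,2k{+}1,2k{+}1)$, escapes this and needs a separate $6\times 6$ rooted-matrix bound built as a tensor sum (Theorem~\ref{thm2.5}, Lemma~\ref{lem2.9}). None of this apparatus is visible in your Stage~2, and there is no evident shortcut around it.
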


Conjecture \ref{conj} is a Nordhaus-Gaddum type problem, which considers the maximum and minimum of the values $u(G)+u(\overbar{G})$ among all graphs of order $n$ for a given graph invariant $u$. In 1956, Nordhaus and Gaddum \cite{ng:56} proved the following inequalities for the chromatic number $\chi(G)$:
$$2\sqrt{n}\leq \chi(G)+\chi\left(\overbar{G}\right)\leq n+1,$$
$${n}\leq \chi(G)\cdot\chi\left(\overbar{G}\right)\leq \frac{(n+1)^2}{4}.$$
These results inspired a significant amount of subsequent research. See \cite{ah:13} for a comprehensive survey.

In 2007, Nikiforov \cite{n:07} proved that
$$\rho(G)+\rho\left(\overbar{G}\right)< \sqrt{2}(n-1)$$
and conjectured that
$$\rho(G)+\rho\left(\overbar{G}\right)\leq \frac{4}{3}n+O(1).$$
Csiv\'{a}ri \cite{c:09} improved Nikiforov's bound to $\frac{1+\sqrt{3}}{2}n-1$. Later, Terpai \cite{t:11} proved Nikiforov's conjecture using analytic method. Very recently, Liu \cite{l:24} proved Conjecture \ref{conj} for $n$ large enough using analytic method. In this paper, we prove Conjecture~\ref{conj} for every $n$ using only linear algebraic techniques.

\section{Preliminaries}
Conjecture~\ref{conj} is trivially true for $n\leq 2$. Hence we assume $n\geq 3$ throughout this paper. Let $\mathscr{S}(n)$ denote the set of $n\times n$ $(0,1)$-matrices with zero diagonal entries, and  $\mathscr{S}^*(n)$ be the subset of $\mathscr{S}(n)$ consisting of matrices $A=(a_{ij})$ satisfying $a_{12}=a_{21}=1$, $a_{n-1,n}=a_{n,n-1}=0$ and
$$\text{if $a_{ij}=1$, then $a_{hk}=1$ for $h\in [i]$ and $k\in [j]$ with  $k\ne h$.}$$
Let $O_n$, $J_n$ and $I_n$ denote the zero matrix, the all ones matrix and the identity matrix respectively of order $n$. Note that if one of $a_{12}=0$, $a_{21}=0$, $a_{n-1,n}=1$, $a_{n,n-1}=1$ holds, then
$A\ne \mathscr{S}^*(n)$ and $\rho(A)+\rho(J_n-I_n-A)=n-1$.
Let $\mathscr{S}_s^*(n)$ be the subset of  $\mathscr{S}^*(n)$ consisting of symmetric matrices.
Note that the adjacency matrices $A(K_n)$ and $A(\overbar{K_n})$ are not in  $\mathscr{S}_s^*(n)$. In this case $\rho(K_n)+\rho(\overbar{K_n})=n-1$ is less than the maximum value.
In this article, when we mention the {\it shape} of $A\in \mathscr{S}^*(n)$, we mean the staircase shape formed by the $1$'s in $A$.

The following is a necessary condition for a graph $G$ of order $n$ to achieve the maximum value of $\rho(G)+\rho\left(\overbar{G}\right)$ due to Csikv\'{a}ri \cite{c:09}.

\begin{lem}[\cite{c:09}]\label{lem2.1}
If $G$ is an extremal graph that attains the maximum value of $\rho(G)+\rho\left(\overbar{G}\right)$ among graphs of order $n$, then there exists an ordering of the vertex set such that $A(G)\in \mathscr{S}_s^*(n)$.
\end{lem}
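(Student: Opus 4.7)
The plan is to exploit Perron eigenvector information for both $A(G)$ and $A(\overline{G})$ simultaneously, and to rule out every non-staircase configuration by a single-edge Rayleigh-quotient swap. Let $\mathbf{x}$ and $\mathbf{y}$ denote nonnegative eigenvectors of $A(G)$ and $A(\overline{G})$ corresponding to $\rho(G)$ and $\rho(\overline{G})$, respectively. Since the maximum of $\rho(G)+\rho(\overline{G})$ exceeds $n-1$ for every $n\ge 3$ (as witnessed, for example, by any nontrivial complete split graph), an extremal $G$ satisfies $G\ne K_n$ and $G\ne N_n$; in particular both $G$ and $\overline{G}$ have an edge.

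First I would fix the vertex labeling so that $x_1\ge x_2\ge\cdots\ge x_n$, using $\mathbf{y}$ to break ties (placing vertices of smaller $y$-value first among those with equal $x$-value). The remaining task is to prove that, under this ordering, the entries of $A(G)$ satisfy the staircase condition defining $\mathscr{S}_s^*(n)$; once this nested-neighborhood structure is in hand, the boundary requirements $a_{12}=1$ and $a_{n-1,n}=0$ are forced by the existence of both an edge and a non-edge in $G$.

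The engine is the familiar Rayleigh comparison. If $G'$ is obtained from $G$ by deleting an edge $jv$ and inserting an edge $iv$ (with $i,j,v$ distinct), then
\begin{align*}
\rho(G')-\rho(G)&\ \ge\ \tfrac{2 x_v}{\|\mathbf{x}\|^2}\bigl(x_i-x_j\bigr),\\
\rho(\overline{G'})-\rho(\overline{G})&\ \ge\ \tfrac{2 y_v}{\|\mathbf{y}\|^2}\bigl(y_j-y_i\bigr).
\end{align*}
Assume the staircase condition fails: some $i<j$ and $v\notin\{i,j\}$ satisfy $jv\in E(G)$ and $iv\notin E(G)$. The $x$-ordering forces $x_i\ge x_j$, so the first bound is nonnegative. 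If in addition $y_j\ge y_i$, and at least one of the two inequalities is strict on a coordinate where the corresponding Perron entry is positive, then the swap strictly increases $\rho(G)+\rho(\overline{G})$, contradicting maximality.

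The main obstacle will be to secure $y_j\ge y_i$ whenever $i<j$ and a violating swap is available; this anti-monotonicity is not automatic from the $x$-ordering. The strategy is to derive it from the same swap machinery applied on the $\overline{G}$-side: whenever $i<j$ but $y_i>y_j$, a symmetric swap improves $\rho(\overline{G})$ without hurting $\rho(G)$, so such configurations cannot survive at the extremum. Two technicalities remain. First, ties where both $x_i=x_j$ and $y_i=y_j$ are handled by choosing, among all extremal graphs, one that is \emph{maximally staircase} in a suitable lexicographic sense and then ruling out residual violations. Second, vertices $v$ at which $x_v=0$ or $y_v=0$ need separate analysis, since the swap bounds are then vacuous; this matters because the conjectured extremal complement $N_{\lfloor n/3\rfloor}+K_{\lceil 2n/3\rceil}$ is disconnected and carries a Perron vector vanishing on its isolated vertices.
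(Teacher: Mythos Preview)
The paper does not reprove this lemma; it quotes Csikv\'ari. His argument does not use single-edge swaps but the Kelmans compression: for a pair $u,v$, delete every edge $vw$ with $w\in N(v)\setminus(N(u)\cup\{u\})$ and add the edge $uw$. The resulting graph $G'$ has $\rho(G')\ge\rho(G)$, because pushing toward $u$ or toward $v$ yields isomorphic graphs, so one may assume $x_u\ge x_v$ and apply Rayleigh. The key point is that $\overline{G'}$ is exactly the Kelmans compression of $\overline{G}$ with the roles of $u$ and $v$ reversed, so the same reasoning gives $\rho(\overline{G'})\ge\rho(\overline{G})$ \emph{simultaneously}. Iterating over all pairs forces nested neighbourhoods, i.e.\ the staircase shape.

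Your proposal has a genuine gap at precisely the step you label ``the main obstacle.'' You claim that if $i<j$ (so $x_i\ge x_j$) but $y_i>y_j$, then ``a symmetric swap improves $\rho(\overline{G})$ without hurting $\rho(G)$.'' That is not so. To raise $\rho(\overline{G})$ via Rayleigh when $y_i>y_j$ you must transfer a $\overline{G}$-edge from $j$ to $i$, which in $G$ means deleting some $iw$ and inserting $jw$; your own inequality then gives $\rho(G')-\rho(G)\ge 2x_w(x_j-x_i)/\|\mathbf{x}\|^2\le 0$, so $\rho(G)$ may well drop and the net change is uncontrolled. Single-edge moves cannot give simultaneous control of both spectral radii because the two Perron orderings need not be compatible, and nothing in your outline forces them to be. The Kelmans operation evades this by moving the whole asymmetric block at once: the two push directions are isomorphic, so one may independently pick the favourable direction for $G$ and for $\overline{G}$. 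Replace your edge-by-edge engine with this block move and the remainder of your outline (existence of an edge and a non-edge forcing $a_{12}=1$, $a_{n-1,n}=0$) goes through.
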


\begin{rem}
While Csikv\'{a}ri \cite{c:09} established that the maximum value in Lemma~\ref{lem2.1} can be achieved by a graph $G$ with a vertex ordering satisfying $A(G)\in \mathscr{S}_s^*(n),$
 his proof implicitly demonstrates that this condition is essential. In other words, the maximum is attained only by such graphs.
\end{rem}

By Lemma~\ref{lem2.1}, to prove Conjecture~\ref{conj}, it suffices to consider graphs $G$ such that  $A(G)\in \mathscr{S}_s^*(n)$. Consequently, either $G$ or its complement graph $\overbar{G}$ is connected. Furthermore, if $G$ is connected  then $\overbar{G}$ is the disjoint union of a connected graph and some isolated vertices.

Although $A(G)$ is symmetric, we will sometimes need to consider nonsymmetric matrices.
Let $A$ be an $n\times n$ matrix, and let $r_i=r_i(A)$ denote the $i$-th row-sum of $A$. The vector $(r_1,r_2,\ldots,r_n)$ is called the {\it row-sum vector} of $A$. The following theorem will be a key tool in our proof of Conjecture \ref{conj}.

\begin{thm}[\cite{dz:13,lw:13}]\label{thm2.3}
If $A$ is a $(0,1)$-matrix with row-sum vector $(r_1,r_2,\ldots,r_n)$, where $r_1\geq r_2\geq\cdots\geq r_n$, then
for $1\leq \ell \leq n$,
\begin{equation}\label{eq2.1}\rho(A)\leq \phi_\ell(A):=\frac{1}{2}\left(
r_\ell-1+\sqrt{(r_\ell + 1)^2 + 4\sum_{i=1}^{\ell-1}(r_i-r_\ell)}\right).
\end{equation}
Moreover, if $A$ is symmetric and irreducible, then $\rho(A)=\phi_\ell(A)$
if and only if $r_1=r_n$ or there exists $t$ such that $2\leq t\leq \ell$, $r_1 = r_{t-1} = n-1$ and
$r_t = r_n$.
\end{thm}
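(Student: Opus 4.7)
The plan is to exploit the Collatz--Wielandt characterization: for any entrywise nonnegative $A$ and positive vector $u$ with $Au \leq \lambda u$ coordinate-wise, $\rho(A)\leq \lambda$. Hence it suffices to exhibit a positive test vector $u$ with $Au \leq \phi_\ell(A)\, u$.

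A preparatory step is to recast the definition of $\phi_\ell$ by squaring: $\phi_\ell$ is the positive root of
\[
(\phi - r_\ell)(\phi + 1) \;=\; \sum_{i=1}^{\ell-1}(r_i - r_\ell),
\]
and this identity will drive the whole argument. Motivated by it, I would set $u_i = 1+\beta_i$ with $\beta_i = (r_i - r_\ell)/(\phi_\ell + 1)$ for $i<\ell$ and $\beta_i = 0$ for $i \geq \ell$. Then $u>0$ since $r_i\geq r_\ell$ for $i<\ell$, and the identity above translates exactly to $S := \sum_i \beta_i = \phi_\ell - r_\ell$. The verification $Au \leq \phi_\ell u$ splits into two cases, both using only $a_{ij}\in\{0,1\}$ and $a_{ii}=0$. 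For $i\geq \ell$, $(Au)_i = r_i + \sum_{j<\ell} a_{ij}\beta_j \leq r_i + S \leq r_\ell + S = \phi_\ell = \phi_\ell u_i$. For $i<\ell$, $(Au)_i \leq r_i + S - \beta_i$, and a short algebraic check reduces this to $\phi_\ell(1+\beta_i) = \phi_\ell u_i$. This proves $\rho(A) \leq \phi_\ell(A)$ for every such $A$.

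For the equality clause in the symmetric irreducible case, suppose $\rho(A) = \phi_\ell$. Perron--Frobenius then forces $Au = \phi_\ell u$ (since $u>0$), so every inequality above is tight. Let $J = \{j<\ell : \beta_j>0\} = \{j<\ell : r_j>r_\ell\}$, which is a prefix $\{1,\ldots,t-1\}$ of $[n]$ by monotonicity of the row sums. If $J = \emptyset$, then $r_j = r_\ell$ for $j<\ell$, and equality in the $i\geq\ell$ branch gives $r_i = r_\ell$ for $i\geq\ell$, whence $r_1 = r_n$. Otherwise $2\leq t\leq \ell$, and the tightness of both branches forces $a_{ij}=1$ for every $j\in J$ and every $i\neq j$; symmetry together with $a_{jj}=0$ then gives $r_j=n-1$ for $j\in J$, while the remaining row sums all equal $r_\ell = r_n$. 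This recovers the second alternative in the statement.

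The main obstacle is guessing the right test vector. The formula for $\beta_i$ is pinned down by demanding equality in the $i<\ell$ bound, and this requirement is self-consistent precisely because of the defining identity for $\phi_\ell$; without that algebraic coincidence the approach would not close. The remaining work is a matter of carefully unpacking the equality conditions, where the delicate point is to recognize $J$ as a prefix of $[n]$ and to extract universal vertices from the tightness constraints.
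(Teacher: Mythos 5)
This theorem is imported by the paper from \cite{dz:13,lw:13} without proof, so there is no in-paper argument to measure you against; judged on its own terms, your Collatz--Wielandt argument for the inequality is correct and is the standard route: with $u_i=1+(r_i-r_\ell)/(\phi_\ell+1)$ for $i<\ell$ and $u_i=1$ otherwise, the identity $(\phi_\ell-r_\ell)(\phi_\ell+1)=\sum_{i<\ell}(r_i-r_\ell)$ makes $Au\le\phi_\ell u$ check out, and your reading of the tightness conditions (for $i\ge\ell$ one needs $r_i=r_\ell$, and every row must hit every column $j$ with $r_j>r_\ell$), combined with symmetry, does yield the two alternatives, so the necessity half of the equality clause is fine. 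One caveat: you use $a_{ii}=0$, which is not literally among the hypotheses; it is genuinely needed (for $A=\bigl(\begin{smallmatrix}1&1\\1&0\end{smallmatrix}\bigr)$ and $\ell=2$ one has $\rho=(1+\sqrt5)/2>\sqrt2=\phi_2$), and it is clearly the intended hypothesis since the paper only applies the theorem to matrices in $\mathscr{S}(n)$, but you should state it explicitly.

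The genuine gap is that you prove only one direction of the ``if and only if.'' The theorem also asserts sufficiency: if $r_1=r_n$, or if there is $t$ with $2\le t\le\ell$, $r_1=r_{t-1}=n-1$ and $r_t=r_n$, then $\rho(A)=\phi_\ell(A)$; your proposal never addresses this. The first case is immediate (constant row sums $r$ give $\rho=r=\phi_\ell$), but the second needs an argument: rows of sum $n-1$ are all ones off the diagonal, symmetry then forces each vertex in $[n]\setminus[t-1]$ to meet all of $[t-1]$, so $A$ is the adjacency matrix of $K_{t-1}\vee H$ with $H$ regular of degree $r_t-t+1$, the partition $\{[t-1],[n]\setminus[t-1]\}$ is equitable, and $\rho(A)$ equals the largest eigenvalue of the quotient $\bigl(\begin{smallmatrix}t-2 & n-t+1\\ t-1 & r_t-t+1\end{smallmatrix}\bigr)$ (cf.\ Lemma~\ref{lem2.7}); its characteristic polynomial coincides with the quadratic $x^2-(r_t-1)x-\bigl(r_t+(t-1)(n-1-r_t)\bigr)$ defining $\phi_\ell$, whence $\rho(A)=\phi_\ell(A)$. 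This is a short computation, but without it the equality characterization is only half proved.
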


\begin{rem}
Liu and Weng proved Theorem \ref{thm2.3} for symmetric $(0,1)$-matrices in \cite{lw:13}, while Duan and Zhou generalized it to nonsymmetric nonnegative matrices in \cite{dz:13}.
\end{rem}

Let $M=(m_{ab})$ be an $\ell\times \ell$ matrix with row-sum vector $(r_1,r_2, \ldots,r_\ell)$. Then  $M$ is {\it rooted} if there exists a constant $d$ such that for $a\in [\ell]$ and $b\in [\ell-1]$, we have $d\delta_{ab}+m_{ab}\geq m_{\ell b}\geq 0$ and $r_a+d\geq r_\ell+d\geq 0$, where $\delta_{ab}=1$ if $a=b$ and $\delta_{ab}=0$ otherwise. Let $\rho_r(M)$ denote the largest real eigenvalue of $M$.
In a particular case of the proof of Conjecture \ref{conj}, we will use the following generalization of Theorem~\ref{thm2.3} due to Cheng and Weng \cite{cw:23}.

\begin{thm}[{\cite[Theorem 5.1]{cw:23}}]\label{thm2.5}
Let $M=(m_{ab})$ be an $\ell\times \ell$ rooted matrix.  If $C=(c_{ij})$ is an $n\times n$ nonnegative matrix and there exists a partition $\Pi=\{\pi_1, \pi_2, \ldots, \pi_\ell\}$ of $[n]$ such that
$$\max_{i\in \pi_a} \sum_{j\in \pi_b} c_{ij}\leq m_{ab} \quad \hbox{and} \quad \max_{i\in \pi_a} \sum_{j=1}^n c_{ij}\leq \sum_{c=1}^\ell m_{ac}$$
for $a\in [\ell]$ and $b\in [\ell-1]$, then $\rho(C)\leq \rho_r(M).$
\end{thm}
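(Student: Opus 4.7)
The plan is a Collatz--Wielandt--type argument: build a nonnegative test vector $x\in\mathbb{R}^n$ satisfying $Cx\le \rho_r(M)\,x$ componentwise, obtained by lifting a suitable right eigenvector of $M$ through the partition $\Pi$. For a nonnegative matrix $C$ and a strictly positive $x$ with $Cx\le\lambda x$ one has $\rho(C)\le\lambda$, which with $\lambda=\rho_r(M)$ is the desired conclusion.

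First I would produce a nonnegative right eigenvector $y=(y_1,\ldots,y_\ell)^T$ of $M$ at $\lambda:=\rho_r(M)$ satisfying $y_\ell=\min_{b\in[\ell]} y_b$. Although $M$ itself need not be nonnegative (its last column is unconstrained), the rooted hypothesis says that the shifted matrix $M+dI$ is nonnegative in its first $\ell-1$ columns, has nonnegative row sums, and that row $\ell$ is pointwise dominated in those columns by every other row. This ``root at index $\ell$'' structure is exactly what is needed to pick out a Perron-like vector whose smallest coordinate sits at $\ell$. I would argue existence by applying Perron--Frobenius to a nonnegative completion of $M+dI$ (for instance, replacing its last column by a suitable nonnegative vector) and then using the rooted inequalities to transfer the resulting eigenvector back to $M$ itself.

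With $y$ in hand, set $x_i:=y_a$ for $i\in\pi_a$ and write $s_b(i):=\sum_{j\in\pi_b}c_{ij}$. For $i\in\pi_a$,
\[
(Cx)_i-\lambda\, x_i\;=\;\sum_{b=1}^{\ell} \bigl(s_b(i)-m_{ab}\bigr)\,y_b.
\]
The local bound $s_b(i)\le m_{ab}$ for $b\in[\ell-1]$ makes those terms nonpositive, while the total-sum bound gives $s_\ell(i)-m_{a\ell}\le \sum_{b<\ell}(m_{ab}-s_b(i))$. Substituting the latter into the $b=\ell$ term and rearranging collapses the sum to
\[
\sum_{b=1}^{\ell-1}\bigl(s_b(i)-m_{ab}\bigr)\bigl(y_b-y_\ell\bigr)\;\le\;0,
\]
since $s_b(i)-m_{ab}\le 0$ and $y_b\ge y_\ell$ by the choice of $y$. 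Hence $Cx\le\lambda x$. If $y>0$ strictly then $x>0$ gives $\rho(C)\le\lambda$ at once; otherwise a rooted-preserving perturbation (e.g.\ enlarging entries in the first $\ell-1$ columns by $\varepsilon>0$) produces a strictly positive eigenvector at eigenvalue $\lambda+O(\varepsilon)$, and letting $\varepsilon\to 0$ finishes the proof.

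The crux is the first step: extracting a nonnegative eigenvector of $M$ at $\rho_r(M)$ whose last coordinate is the smallest. Because $M$ is not a priori nonnegative, Perron--Frobenius cannot be applied to $M$ directly, and one must carefully exploit both the entrywise inequality $m_{ab}+d\delta_{ab}\ge m_{\ell b}\ge 0$ and the row-sum inequality $r_a\ge r_\ell\ge -d$ that together constitute the rooted hypothesis. Once that structured eigenvector is secured, the verification of $Cx\le\lambda x$ is essentially the one-line algebraic identity above.
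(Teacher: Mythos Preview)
The paper does not prove Theorem~\ref{thm2.5}; it is quoted verbatim from \cite[Theorem 5.1]{cw:23}. So there is no ``paper's own proof'' to compare against, and your proposal must stand on its own.

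Your reduction step is correct: given a nonnegative right eigenvector $y$ of $M$ at $\lambda=\rho_r(M)$ with $y_\ell=\min_b y_b$, the lifted vector $x$ satisfies $(Cx)_i-\lambda x_i=\sum_{b<\ell}(s_b(i)-m_{ab})(y_b-y_\ell)\le 0$, exactly as you wrote. That part is clean.

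The genuine gap is the existence of such a $y$, and your two suggested routes do not close it. The ``nonnegative completion'' idea---replace the last column of $M+dI$ by something nonnegative and apply Perron--Frobenius---produces an eigenpair for a \emph{different} matrix, and you give no mechanism to ``transfer it back'' to $M$; changing the last column generally changes both the spectrum and the eigenvectors, so this step is not just incomplete but unclear in principle. The perturbation idea at the end is more promising, but as stated it is circular: you perturb to obtain a strictly positive eigenvector, yet the argument that a rooted matrix has \emph{any} nonnegative eigenvector with minimal last coordinate is exactly what is missing, and adding $\varepsilon$ to the first $\ell-1$ columns still leaves the last column unconstrained, so Perron--Frobenius still does not apply directly to the perturbed matrix either.

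What actually has to be proved is a structural fact about rooted matrices: $\rho_r(M)$ exists (i.e.\ $M$ has a real eigenvalue), equals the spectral radius of a certain associated nonnegative matrix, and admits an eigenvector in the cone $\{y\ge 0:\ y_\ell\le y_b\ \text{for all }b\}$. One way to get this is a fixed-point argument on that cone for the map $y\mapsto (M+\mu I)y$ with $\mu$ large, after first showing the cone is invariant---and invariance is where the rooted inequalities $m_{ab}+d\delta_{ab}\ge m_{\ell b}$ and $r_a\ge r_\ell$ are genuinely used, not merely to bound entries but to compare the $\ell$-th coordinate of the image against the others. You have correctly located the crux; what remains is to supply that invariance-and-fixed-point (or an equivalent) argument rather than to gesture at Perron--Frobenius for a modified matrix.
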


We will use the concept of an equitable partition and its associated quotient matrix to simplify the computation of $\rho_r(M)$ (see, e.g., \cite[Section 2.3]{Brou}). Let $\Pi=\{\pi_1,\ldots,\pi_k\}$ a partition of $[\ell]$ into $k$ blocks. For $s, t\in [k]$, let
$M_{st}$ denote the submatrix of $M$ induced on $\pi_s\times \pi_t$.
If the row sums of each submatrix $M_{st}$  are constant, say  $m'_{st}$, then $\Pi$ is called the {\it equitable partition} of $M$, and the $k\times k$ matrix $\Pi(M)=(m'_{st})_{s, t\in [k]}$ is called the {\it quotient matrix} of $M$ with respect to equitable partition $\Pi$. The following two lemmas are well-known results.

\begin{lem}[\cite{cw:23}]\label{lem2.6}
If $M$ is a rooted $\ell\times \ell$ matrix and its transpose $M^T$ has an equitable partition
$\Pi=\{\pi_1, \pi_2, \ldots,\pi_k\}$ of $[\ell]$  such that $\pi_k=\{\ell\}$, then $\rho_r(M)=\rho_r(\Pi(M^T))$.
\end{lem}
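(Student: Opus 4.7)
The plan is to exploit the intertwining relation induced by the equitable partition. Let $S\in\{0,1\}^{\ell\times k}$ denote the characteristic matrix of $\Pi$, so $S_{ij}=1$ iff $i\in\pi_j$, and write $B:=\Pi(M^T)$. Unwinding the equitable-partition condition for $M^T$ gives the key identity $M^T S=SB$, and upon transposing, $S^T M=B^T S^T$. For the easy direction, if $v$ is a real eigenvector of $B$ with eigenvalue $\rho_r(B)$, then $M^T(Sv)=S(Bv)=\rho_r(B)(Sv)$, and since $S$ has full column rank we have $Sv\neq 0$. Hence $\rho_r(B)$ is a real eigenvalue of $M^T$ and therefore of $M$, giving $\rho_r(B)\leq\rho_r(M)$.

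For the reverse inequality, take a real eigenvector $x$ of $M$ with $Mx=\mu x$ for $\mu=\rho_r(M)$. The transposed identity yields $B^T(S^T x)=S^T(Mx)=\mu(S^T x)$, so as long as $S^T x\neq 0$, the value $\mu$ is an eigenvalue of $B^T$ and hence of $B$, yielding $\rho_r(M)\leq\rho_r(B)$. The entire argument thus reduces to exhibiting an eigenvector $x$ of $M$ for $\rho_r(M)$ with $S^T x\neq 0$; this is the main obstacle.

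To overcome it I would invoke both the rooted hypothesis and the singleton condition $\pi_k=\{\ell\}$. The rooted condition supplies $M$ with a Perron-like substructure: with the constant $d$ from the definition, $M+dI$ has nonnegative diagonal entries and nonnegative off-diagonal entries in its first $\ell-1$ columns, and every row sum satisfies $r_a+d\geq r_\ell+d\geq 0$. A Perron--Frobenius-style argument on this nearly-nonnegative matrix---using the row-sum lower bound to compensate for the unconstrained last column---then produces a nonnegative eigenvector $x\geq 0$, $x\neq 0$, of $M$ for the eigenvalue $\rho_r(M)$. Any positive coordinate $x_{i_0}>0$ of such an $x$ forces $(S^T x)_{s_0}=\sum_{i\in\pi_{s_0}}x_i>0$ for the block $\pi_{s_0}\ni i_0$, so $S^T x\neq 0$ as required. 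The condition $\pi_k=\{\ell\}$ enters precisely here: isolating the last coordinate into its own block localizes the effect of the problematic last column and prevents it from spoiling the block-sum positivity, which is the delicate point of the argument.
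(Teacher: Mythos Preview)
The paper does not prove this lemma; it is quoted from \cite{cw:23} as a known result, so there is no in-paper argument to compare your proposal against.

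On its own merits: your framework via the intertwining relation $M^T S = SB$ and its transpose $S^T M = B^T S^T$ is the standard one, and the easy direction $\rho_r(B)\le\rho_r(M)$ is correctly argued. The problem lies in the hard direction. You correctly identify the obstacle---producing an eigenvector $x$ of $M$ at $\rho_r(M)$ with $S^T x\neq 0$---but you do not actually overcome it. The assertion that a ``Perron--Frobenius-style argument'' yields a \emph{nonnegative} eigenvector of $M$ for $\rho_r(M)$ is precisely the nontrivial content here, not a routine step. The matrix $M+dI$ is nonnegative only in its first $\ell-1$ columns; the last column is constrained merely through the row-sum inequalities and may well have negative entries, so the classical Perron--Frobenius theorem does not apply. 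That rooted matrices nevertheless admit such an eigenvector is the substantive part of the rooted-matrix machinery developed in \cite{cw:23}, and it requires a real argument, not a one-line invocation of ``row-sum lower bounds compensating for the last column.''

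Your account of where the hypothesis $\pi_k=\{\ell\}$ enters is also off. If you already had a nonnegative eigenvector $x\neq 0$, then $S^T x\neq 0$ would follow from any single positive coordinate of $x$, regardless of how the blocks are arranged; the singleton condition plays no role in that final check. The condition $\pi_k=\{\ell\}$ is needed earlier---either in constructing the eigenvector itself, or in guaranteeing that the quotient $B=\Pi(M^T)$ is again rooted so that $\rho_r(B)$ is meaningful---not in the block-sum positivity you describe.
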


\begin{lem}[\cite{g:93}]\label{lem2.7}
If $M$ is a nonnegative $\ell\times \ell$ matrix that has an equitable partition $\Pi=\{\pi_1,\pi_2,\ldots,\pi_k\}$ of $[\ell]$, then $\rho(M)=\rho(\Pi(M))$.
\end{lem}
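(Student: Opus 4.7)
The plan is to encode the partition $\Pi$ by its characteristic matrix $P\in\{0,1\}^{\ell\times k}$, defined by $P_{is}=1$ exactly when $i\in\pi_s$. The key observation is that the equitable-partition hypothesis is equivalent to the single matrix identity $MP=P\,\Pi(M)$: the $(i,s)$-entry of $MP$ is the row sum $\sum_{j\in\pi_s}M_{ij}$, which by equitability depends only on the class of $i$, and that common value is by definition $\Pi(M)_{ts}$ when $i\in\pi_t$. Once this identity is in hand, both inequalities $\rho(\Pi(M))\le\rho(M)$ and $\rho(M)\le\rho(\Pi(M))$ fall out of Perron--Frobenius theory for nonnegative matrices, with $P$ serving as the tool to transfer eigenvectors between $M$ and $\Pi(M)$.

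For the easier direction $\rho(\Pi(M))\le\rho(M)$, I would take a nonnegative Perron eigenvector $y$ of the nonnegative matrix $\Pi(M)$. Then $Py$ is a nonzero nonnegative vector, and $M(Py)=P\,\Pi(M)\,y=\rho(\Pi(M))\,Py$, so $\rho(\Pi(M))$ appears in the spectrum of $M$. For the reverse inequality $\rho(M)\le\rho(\Pi(M))$, I would transpose the structural identity to obtain $P^T M^T=\Pi(M)^T P^T$ and apply it to a nonnegative Perron eigenvector $w$ of $M^T$ (which exists because $\rho(M^T)=\rho(M)$). Then $\Pi(M)^T(P^T w)=P^T M^T w=\rho(M)\,(P^T w)$, exhibiting $\rho(M)$ as an eigenvalue of $\Pi(M)^T$, whose spectrum coincides with that of $\Pi(M)$; since $\Pi(M)$ is nonnegative, $\rho(M)\le\rho(\Pi(M))$ follows.

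The only nondegeneracy check that must not be skipped is $P^T w\ne 0$ (and the analogous $Py\ne 0$ in the first step). This is where nonnegativity of $w$ is essential: $(P^T w)_s=\sum_{j\in\pi_s}w_j$ vanishes only if $w$ is zero on the entire block $\pi_s$, and since no block is empty and $w\ne 0$, at least one coordinate of $P^T w$ must be strictly positive. Beyond this point I foresee no real obstacle; once the identity $MP=P\,\Pi(M)$ is written down, the rest is routine verification, consistent with the statement being a classical lemma of equitable-partition theory.
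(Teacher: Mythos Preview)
Your argument is correct and is the standard textbook proof via the characteristic matrix $P$ of the partition and the intertwining relation $MP=P\,\Pi(M)$; the nondegeneracy checks $Py\ne 0$ and $P^T w\ne 0$ are handled properly using nonnegativity. Note that the paper itself does not supply a proof of this lemma: it is stated as a well-known result and cited from Godsil's \emph{Algebraic Combinatorics}, where essentially the same characteristic-matrix approach appears, so there is nothing to compare against beyond confirming that your write-up matches the classical argument.
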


The following lemma is useful for estimating upper bounds of $\rho(G)+\rho\left(\overbar{G}\right)$ later.
\begin{lem}\label{lem2.8}
If $0\leq E \leq F$ and $E + F$ is a constant $C$, then $\sqrt{E}+\sqrt{F}$ is increasing as a function of $E$, and the maximum occurs when $E = C/2$ (i.e., $E = F$).
\end{lem}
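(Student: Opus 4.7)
The plan is to reparametrize and reduce the claim to a one-variable inequality. Since $E+F=C$ is fixed, write $F = C-E$ and consider the function
\[
f(E)=\sqrt{E}+\sqrt{C-E},\qquad E\in[0,C/2].
\]
I would establish monotonicity by squaring: $f(E)^{2}=C+2\sqrt{E(C-E)}$, which reduces the problem to showing that the quadratic $g(E)=E(C-E)=CE-E^{2}$ is nondecreasing on $[0,C/2]$. Since $g'(E)=C-2E\ge 0$ precisely when $E\le C/2$, the function $g$ is strictly increasing on $[0,C/2)$ and attains its maximum at $E=C/2$. Because the square root is monotonically increasing on $[0,\infty)$ and $f(E)\ge 0$, monotonicity of $g$ transfers to monotonicity of $f$, and the two functions attain their maxima at the same point $E=C/2$, where $F=C-E=C/2=E$.

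Alternatively, one can differentiate $f$ directly on $(0,C)$ to obtain
\[
f'(E)=\frac{1}{2\sqrt{E}}-\frac{1}{2\sqrt{C-E}},
\]
which is nonnegative exactly when $\sqrt{E}\le \sqrt{C-E}$, i.e.\ when $E\le C/2$. This gives the same conclusion, with continuity of $f$ on the closed interval $[0,C/2]$ covering the boundary values.

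I do not expect any genuine obstacle here: the statement is an elementary inequality of AM--GM type, and the two possible arguments (squaring, or differentiation) are both one-line verifications once the parametrization $F=C-E$ is fixed. The only mild care needed is at $E=0$, where $f$ is not differentiable; the squaring approach sidesteps this issue entirely.
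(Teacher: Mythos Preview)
Your proposal is correct and follows essentially the same route as the paper: the paper also squares $\sqrt{E}+\sqrt{C-E}$ to obtain $C+2\sqrt{E(C-E)}$ and then notes that $E(C-E)$ is increasing on $[0,C/2]$. Your write-up is more detailed (you compute $g'(E)=C-2E$ explicitly and offer the direct differentiation of $f$ as an alternative), but the core argument is identical.
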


\begin{proof}
Note that $\left(\sqrt{E}+\sqrt{F}\right)^2=\left(\sqrt{E}+\sqrt{C-E}\right)^2=C+2\sqrt{E(C-E)}$. Then $E(C-E)$ is increasing on the interval $[0, C/2]$ and decreasing on the interval $[C/2, C]$ and the result follows.
\end{proof}

Let $A=(a_{ij})$ be an $m\times n$ matrix and $B=(b_{ij})$ be a $p\times q$ matrix. The {\it tensor product} of $A$ and $B$, denoted by $A\otimes B$, is the $mp\times nq$ block matrix
$$\begin{pmatrix}
a_{11}B&a_{12}B&\cdots&a_{1n}B\\
a_{21}B&a_{22}B&\cdots&a_{2n}B\\
\vdots&\vdots&\cdots&\vdots\\
a_{m1}B&a_{m2}B&\cdots&a_{mn}B
\end{pmatrix}.$$
For column vectors  $u$ and $v$, we define $u\otimes v$ by regarding $u$ and $v$ as matrices with  one column. The following is a fundamental result in matrix theory. One may follow the proof of \cite[Theorem 4.2.12]{Horn}) to prove the result.

\begin{lem}[{\cite[Theorem 4.2.12]{Horn}}]\label{lem2.9}
If $A$ and $B$ are $n\times n$ and $m\times m$ matrices with eigenvalues $\lambda_1, \lambda_2, \ldots, \lambda_n$ and $\mu_1, \mu_2, \ldots, \mu_m$, respectively, then the eigenvalues of $A\otimes I_m+I_n\otimes B$ are $\lambda_i+\mu_j$ for $1\leq i,j\leq n$.
\end{lem}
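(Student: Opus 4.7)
The plan is to verify the eigenvector identity on simple tensors and then upgrade to the full eigenvalue list via Schur triangularization.

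First I would show that if $Au=\lambda u$ and $Bv=\mu v$, then $u\otimes v$ is an eigenvector of $A\otimes I_m+I_n\otimes B$ with eigenvalue $\lambda+\mu$. This follows from the mixed-product property of the tensor product, namely $(A\otimes I_m)(u\otimes v)=(Au)\otimes(I_m v)=\lambda(u\otimes v)$ and $(I_n\otimes B)(u\otimes v)=(I_n u)\otimes(Bv)=\mu(u\otimes v)$, which I would justify directly from the block definition of $A\otimes B$ given in the excerpt. In the special case that both $A$ and $B$ are diagonalizable this already finishes the proof, because $nm$ linearly independent tensor eigenvectors $u_i\otimes v_j$ are produced and $A\otimes I_m+I_n\otimes B$ is an $nm\times nm$ matrix.

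For the general case the cleanest route is Schur's triangularization: choose unitary $U,V$ with $U^*AU=T_A$ and $V^*BV=T_B$ upper triangular, having diagonal entries $\lambda_1,\ldots,\lambda_n$ and $\mu_1,\ldots,\mu_m$ respectively. Using $(U\otimes V)^{-1}=U^*\otimes V^*$ and the mixed-product rule once more, one computes
\begin{equation*}
(U\otimes V)^*\bigl(A\otimes I_m+I_n\otimes B\bigr)(U\otimes V)=T_A\otimes I_m+I_n\otimes T_B.
\end{equation*}
The right-hand side is upper triangular: $T_A\otimes I_m$ is block upper triangular with each nonzero block a scalar multiple of $I_m$, and $I_n\otimes T_B$ is block diagonal with each diagonal block equal to the upper triangular matrix $T_B$. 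Reading off the diagonal of the sum gives exactly the $nm$ numbers $\lambda_i+\mu_j$, and since similarity preserves the spectrum, these are all the eigenvalues of $A\otimes I_m+I_n\otimes B$.

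The only delicate step is confirming that the conjugated matrix really is upper triangular; I would do this by writing out the $(ij,kl)$-indexed entry, with the lexicographic order $(i,j)\prec(k,l)$ iff $i<k$ or ($i=k$ and $j<l$), and observing that an entry is nonzero only when $(i,j)\preceq(k,l)$. Beyond that the argument is a bookkeeping exercise, and no deeper ingredient than Schur triangularization plus the mixed-product property is needed.
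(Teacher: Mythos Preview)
Your argument is correct, but it differs from the paper's. The paper proceeds via generalized eigenvectors rather than Schur triangularization: it chooses bases $\{u_i\}$ and $\{v_j\}$ of generalized eigenvectors for $A$ and $B$, and then expands $(A\otimes I_m+I_n\otimes B-(\lambda_i+\mu_j)I_{nm})^{\ell+k}(u_i\otimes v_j)$ by the binomial theorem (using that $(A-\lambda_i I_n)\otimes I_m$ and $I_n\otimes(B-\mu_j I_m)$ commute) to see that each $u_i\otimes v_j$ is a generalized $(\lambda_i+\mu_j)$-eigenvector. Since the $u_i\otimes v_j$ form a basis of the ambient space, this accounts for all $nm$ eigenvalues. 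Your Schur route is equally valid and arguably tidier, since it bypasses any mention of nilpotent parts and reduces the claim to reading a diagonal; the paper's route, on the other hand, explicitly identifies a basis of generalized eigenvectors, which is a bit more structural information. Either way the essential ingredient is the mixed-product rule, and both proofs rest on it.
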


\begin{proof}  Note that $A\otimes I_m+I_n\otimes B$ is an $nm\times nm$ matrix.
Let $u_i$ be a generalized $\lambda_i$-eigenvector of $A$, and $v_j$ be a generalized $\mu_j$-eigenvector of $B$
such that  $\{u_i\mid i\in [n]\}$ and $\{v_j\mid j\in [m]\}$ are bases in $\mathbb{R}^n$ and $\mathbb{R}^m$, respectively. Then there exists $\ell$ and $k$ suck that $(A-\lambda_iI_n)^\ell u_i=0$ and $(B-\mu_jI_m)^kv_j=0$. The lemma follows since 
$$
\begin{array}{ll}
&(A\otimes I_m+I_n\otimes B-(\lambda_i+\mu_j)I_n\otimes I_m)^{\ell+k}(u_i\otimes v_j)\\
=&((A-\lambda_iI_n)\otimes I_m+I_n\otimes (B-\mu_j I_m))^{\ell+k}(u_i\otimes v_j)\\
=&\displaystyle\sum_{h=0}^{\ell+k}{\ell+k\choose h}(A-\lambda_iI_n)^hu_i\otimes(B-\mu_j I_m)^{\ell+k-h}v_j\\
=&0
\end{array}$$ 
and  $\{u_i\otimes v_j\mid i\in [n],~j\in [m]\}$ is a basis  of $\mathbb{R}^{nm}$.
\end{proof}

\section{The upper bound $\phi(A)$ of $\rho(A)$}\label{sec3}

For $A\in\mathscr{S}^*(n)$, there are $n$ upper bounds $\phi_\ell(A)$  of $\rho(A)$ with $\ell\in [n]$ in Theorem~\ref{thm2.3}.  We will choose a particular nice one in this section. Indeed, we will choose
$\phi_\ell(A)$ with $\ell=c+1$, where $$c=c(A):=\max\left\{i\in [n]~:~ r_1+r_2+\cdots+r_i>  i(i-1)\right\}$$
and $(r_1\ldots,r_n)$ is the row-sum vector of $A$.
The above $c$ is well-defined since $r_1> 0=1\cdot(1-1)$ and $r_1+r_2+\cdots+r_n< n(n-1).$
In particular $c\in [n-1]$.
Let $v=v(A):=r_{c+1}$ and  $s=s(A):=\sum_{i=1}^{c}r_i-c(c-1)$.
For example, the matrix $A(K_t+ N_{n-t})$ has $c=v=s=t-1$, where $1\leq t\leq n-1$.
For another example, the matrix $A$ in the left of Figure~\ref{fig1} has  $c=4$, $v=2$ and  $s=1$.

\begin{figure}[h]
$$
A=\begin{array}{rl}
& \left.\begin{array}{cccccc} ~~&~~ v&~&~~~&~&~\end{array}\right.  \\
\begin{array}{r}
1\\
\downarrow\\
\\
c\\
\\
\\
\end{array}
&
\left(
\begin{array}{cccccc}
0&1&1&\multicolumn{1}{c|}{1} &1&1\\
1&0&1&\multicolumn{1}{c|}{1}&0&0\\ \cline{3-6}
1&1&\multicolumn{1}{|c}{0}&\multicolumn{1}{c|}{0}&0&0\\
1&1&\multicolumn{1}{|c}{1}&\multicolumn{1}{c|}{0}&0&0 \\ \cline{1-4}
1&1&\multicolumn{1}{|c}{0}&0&0&0\\
1&0&\multicolumn{1}{|c}{0}&0&0&0
\end{array}\right)\\
&\begin{array}{cccc} ~~&~~&~~\overbar{c}&~~\leftarrow~1\end{array}
\end{array}, ~
A_1=\begin{array}{rl}
& \left.\begin{array}{cccccc} ~~&~~&~v_1&~~~&~&~\end{array}\right.  \\
\begin{array}{r}
\\
\\
\\
c_1\\
\\
\\
\end{array}
&
\left(
\begin{array}{cccccc}
0&1&1&\multicolumn{1}{c|}{1} &1&1\\
1&0&1&\multicolumn{1}{c|}{1}&0&0\\
1&1&0&\multicolumn{1}{c|}{0}&0&0\\
1&1&1&\multicolumn{1}{c|}{0}&0&0 \\ \cline{1-4}
1&1&1&0&0&0\\
0&0&0&0&0&0
\end{array}\right)
\end{array}
$$

\caption{The matrices $A, A_1\in \mathscr{S}^*(n)$ with $\rho(A)\leq \phi(A)<  \phi(A_1)$.}
\label{fig1}
\end{figure}

Since $c, v, s$ are uniquely determined from $A$, we denote the upper bound $\phi_{c+1}(A)$ of $\rho(A)$ in Theorem~\ref{thm2.3} by $\phi(A)$, where
\begin{align}\label{eq3.1}
\phi(A):=&\frac{1}{2}\left(v-1+\sqrt{(v + 1)^2 + 4(s+c(c-1)-cv)}\right) \nonumber\\
           =&\frac{1}{2}\left(v-1+\sqrt{(2c-v-1)^2+4s}\right).
\end{align}

\begin{lem}\label{lem3.1}
If $A\in\mathscr{S}^*(n)$ has parameters $c=c(A)$, $v=v(A)$, $s=s(A)$ and the function $\phi(A)$ of $A$ is as in (\ref{eq3.1}), then the following (i)-(iii) hold.
\begin{enumerate}
\item[(i)] $\rho(A)\leq \phi(A)$. Moreover, if $A\in\mathscr{S}^*_s(n)$,  then $\rho(A)= \phi(A)$ if and only if  $A=A(K_{r_1+1}+N_{n-r_1-1})$ or there exists $t$ such that $2\leq t\leq c+1$ and $A=A((K_{t-1}\vee N_{r_1+2-t})+N_{n-r_1-1}).$
\item[(ii)] $v\leq c$ and $0<s\leq 2c-v$.
\item[(iii)]$c-1<\phi(A)\leq c$, and $\phi(A)=c$ if and only if $s+v=2c$.
\end{enumerate}
\end{lem}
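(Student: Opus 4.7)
The plan is to derive all three parts from the defining bounds on $c$, namely $\sum_{i=1}^c r_i > c(c-1)$ and $\sum_{i=1}^{c+1} r_i \le c(c+1)$, combined with the identities $r_{c+1}=v$ and $\sum_{i=1}^c r_i = s + c(c-1)$. Parts (ii) and (iii) reduce to elementary algebra with these inequalities, while part (i) is Theorem~\ref{thm2.3} applied at $\ell = c+1$, together with a case analysis for the equality characterization.

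\textbf{Part (i).} I would first verify that $\phi(A) = \phi_{c+1}(A)$ by substituting $r_{c+1}=v$ and $\sum_{i=1}^{c}(r_i - v) = s + c(c-1) - cv$ into (\ref{eq2.1}); the identity $(v+1)^2 + 4(s+c(c-1)-cv) = (2c-v-1)^2 + 4s$ is a one-line expansion. The bound $\rho(A)\le\phi(A)$ then follows from Theorem~\ref{thm2.3}. For the equality case when $A\in\mathscr{S}^*_s(n)$, the subtlety is that Theorem~\ref{thm2.3}'s equality clause assumes irreducibility, but $A$ may carry trailing zero rows corresponding to isolated vertices. Exploiting the staircase structure of $\mathscr{S}^*_s(n)$, the nonzero block of $A$ is irreducible of order $r_1+1$ and is the adjacency matrix of the connected component containing vertex $1$. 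Applying Theorem~\ref{thm2.3}'s equality clause to this block leaves two possibilities: either the block has constant row sums (forcing $K_{r_1+1}$, hence the first family), or its row sums take two values with the top ones equal to $r_1 = (r_1+1)-1$ (forcing the complete split graph $K_{t-1}\vee N_{r_1+2-t}$, hence the second family). The converse---that both families attain equality---is then a direct check using Lemma~\ref{lem2.7} on the obvious two-cell equitable partition and matching the result with $\phi(A)$ computed from (\ref{eq3.1}).

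\textbf{Parts (ii) and (iii).} For (ii), monotonicity of row sums gives $(c+1)v \le \sum_{i=1}^{c+1}r_i \le c(c+1)$, yielding $v\le c$; the bound $s>0$ is just the strict defining inequality for $c$; and $s\le 2c-v$ rewrites as $\sum_{i=1}^c r_i \le c^2+c-v$, which follows from $\sum_{i=1}^{c+1} r_i \le c(c+1)$ after subtracting $r_{c+1}=v$. For (iii), using $s\le 2c-v$ and $2c-v-1\ge c-1\ge 0$,
$$(2c-v-1)^2 + 4s \le (2c-v-1)^2 + 4(2c-v) = (2c-v+1)^2,$$
giving $\phi(A)\le c$ with equality iff $s = 2c-v$; the strict positivity $s>0$ then supplies $\sqrt{(2c-v-1)^2+4s} > 2c-v-1$ and hence $c-1 < \phi(A)$. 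The principal obstacle in the whole argument is the equality analysis in~(i): Theorem~\ref{thm2.3}'s clause must be adapted to handle the possible reducibility of $A$ by restricting to the connected component, and its two structural conclusions must then be translated faithfully back into the two families stated in the lemma.
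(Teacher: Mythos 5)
Your proposal is correct and follows essentially the same route as the paper: part (i) is Theorem~\ref{thm2.3} at $\ell=c+1$ with the equality case handled by passing to the irreducible block of order $r_1+1$ obtained by deleting zero rows and columns, and parts (ii)--(iii) are the same elementary manipulations of the defining inequalities $\sum_{i=1}^{c}r_i>c(c-1)$ and $\sum_{i=1}^{c+1}r_i\le c(c+1)$ together with the identity $(2c-v-1)^2+4(2c-v)=(2c-v+1)^2$. The only cosmetic difference is your separate converse verification via an equitable partition, which the paper avoids by invoking the ``if and only if'' clause of Theorem~\ref{thm2.3} directly.
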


\begin{proof}
(i) The first statement is a rephrasing of (\ref{eq2.1}) by choosing $\ell=c+1$.
To prove the second statement, assume $A\in\mathscr{S}^*_s(n)$. Let $A'\in \mathscr{S}^*_s(r_1+1)$ be the symmetric and irreducible matrix obtained from $A$ by deleting the zero rows and columns. Since the row-sum vector of $A'$ is the same as the nonzero part of the row-sum vector of $A$,
$\rho(A)=\phi(A)$ is equivalent to $r_1=r_{r_1+1}$ or there exists $t$ such that $2\leq t\leq c+1$, $r_1=r_{t-1}$ and $r_t=r_{r_1+1}$ by the second statement of Theorem~\ref{thm2.3}. This is also equivalent to $A=A(K_{r_1+1}+N_{n-r_1-1})$ or there exists $t$ such that $2\leq t\leq c+1$ and $A=A((K_{t-1}\vee N_{r_1+2-t})+N_{n-r_1-1})$ since $A$ is symmetric.
\medskip

\noindent (ii) If $v\geq c+1$ then from the shape of $A\in \mathscr{S}^*(n)$, we have $\sum_{j=1}^{c+1} r_j \geq (c+1)^2>c(c+1)$, a contradiction to the definition of $c$. Since
$\sum_{j=1}^c r_j> c(c-1)$ and $\sum_{j=1}^{c+1} r_j\leq c(c+1),$ we have $s=\sum_{j=1}^c r_j-c(c-1)>0$ and $2c-v=c(c+1)-c(c-1)-v\geq \sum_{j=1}^{c+1} r_j-c(c-1)-r_{c+1}=s$.
\medskip

\noindent (iii) By (\ref{eq3.1}) and (ii) above, we have
\begin{align*}
\phi(A)&=\frac{1}{2}\left(v-1+\sqrt{(2c-v-1)^2+4s}\right) \\
&\leq\frac{1}{2}\left(v-1+\sqrt{(2c-v-1)^2+4(2c-v)}\right)=c.
\end{align*}
Moreover, $\phi(A)=c$  if and only if $s=2c-v$. Since $s>0$ by (ii), we have  $\phi(A)> \frac{1}{2}(v-1+2c-v-1)=c-1$ by (\ref{eq3.1}).
\end{proof}

The following is a realization of the value $\phi(A)$ by the spectral radius of a matrix
\begin{equation}\label{eq3.2}\phi(A)=\rho\left(\begin{bmatrix}c-1& s \\ 1& v-c\end{bmatrix}\right)=\rho\left(\begin{bmatrix}2c-1& s \\ 1& v\end{bmatrix}\right)-c.\end{equation}

\begin{prop}\label{prop3.2}
If $A\in\mathscr{S}_s^*(n)$ has parameters $c=c(A)$, $v=v(A)$, $s=s(A)$
and there exists $A_1\in \mathscr{S}^*(n)$ whose parameters $c_1=c(A_1)$, $v_1=v(A_1)$, $s_1=s(A_1)$ satisfy $c_1=c$, $v_1>v$ and $s_1\geq s$, then $\rho(A)<\rho(A_1)$.
\end{prop}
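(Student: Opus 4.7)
The plan is to combine strict monotonicity of $\phi$ in the parameters $(v,s)$ with Lemma \ref{lem3.1}(i), and then to bridge the resulting inequality on $\phi$-values to an inequality on spectral radii via a Collatz--Wielandt-type argument. First, from (\ref{eq3.1}), the partial derivative of $\phi(c,v,s)=\tfrac12(v-1+\sqrt{(2c-v-1)^2+4s})$ with respect to $s$ equals $1/\sqrt{(2c-v-1)^2+4s}>0$, and the partial with respect to $v$ equals $\tfrac12\bigl(1-(2c-v-1)/\sqrt{(2c-v-1)^2+4s}\bigr)$, which is positive because $s>0$ (by Lemma \ref{lem3.1}(ii)) forces the radical to strictly exceed $2c-v-1\geq 0$. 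Combined with $v_1>v$ and $s_1\geq s$, this gives $\phi(A)<\phi(A_1)$, while Lemma \ref{lem3.1}(i) supplies $\rho(A)\leq \phi(A)$.

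The crux is then to produce a lower bound for $\rho(A_1)$ strictly exceeding $\rho(A)$. My approach is to exhibit a nonnegative vector $x$ with $A_1x\geq \lambda x$ componentwise for some $\lambda > \rho(A)$, which forces $\rho(A_1)\geq \lambda > \rho(A)$ via the Collatz--Wielandt characterization of the spectral radius of a nonnegative matrix. The natural candidate for $x$ comes from the realization (\ref{eq3.2}) of $\phi(A_1)$ as the spectral radius of $\begin{bmatrix} c-1 & s_1 \\ 1 & v_1-c \end{bmatrix}$: lift the Perron eigenvector of this $2\times 2$ matrix along the partition of $[n]$ into the block corresponding to the first $c+1$ coordinates (where $A_1$ concentrates its $1$'s) and its complement. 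Because the first $c+1$ rows of $A_1$ collectively contain $c(c-1)+s_1+v_1>c(c-1)+s+v$ ones, i.e.\ strictly more than the corresponding rows of $A$, the lifted $x$ should satisfy $A_1 x \geq \lambda x$ with strict inequality in at least one coordinate for some $\lambda>\rho(A)$.

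The main obstacle is that $\rho(A_1)$ need not exceed $\phi(A)$ in general --- small examples show that $\rho(A_1)<\phi(A)$ can occur --- so one cannot simply chain $\rho(A)\leq \phi(A)\leq \rho(A_1)$. The lower bound must therefore be calibrated against $\rho(A)$ itself rather than against $\phi(A)$. I expect the hardest technical point to be the extremal case $\rho(A)=\phi(A)$, where Lemma \ref{lem3.1}(i) pins $A$ down to the adjacency matrix of $K_{r_1+1}+N_{n-r_1-1}$ or of $(K_{t-1}\vee N_{r_1+2-t})+N_{n-r_1-1}$. In this case the Perron eigenvector of $A$ is constant on the cells of the obvious equitable partition, and the comparison should be verifiable by direct computation using the quotient-matrix techniques of Lemmas \ref{lem2.6} and \ref{lem2.7}.
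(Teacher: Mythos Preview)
The statement as printed contains a typo: the intended conclusion is $\phi(A)<\phi(A_1)$, not $\rho(A)<\rho(A_1)$. This is confirmed by (a) the paper's own proof, which argues only that the spectral radius of the matrix $\bigl[\begin{smallmatrix}2c-1&s\\1&v\end{smallmatrix}\bigr]$ in (\ref{eq3.2}) strictly increases with $v$ and $s$; (b) the caption of Figure~\ref{fig1}, which reads ``$\rho(A)\leq\phi(A)<\phi(A_1)$''; (c) the sentence immediately following the proposition, which invokes it to conclude ``$\phi(A_1)>\phi(A)$''; and (d) its only later use, in Lemma~\ref{lem3.3}, where it yields ``$\phi(\overbar{A})<\phi(\overbar{A}_1)$''.

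Your first paragraph already proves the intended inequality $\phi(A)<\phi(A_1)$, and by essentially the same mechanism as the paper: you compute partial derivatives of the closed form (\ref{eq3.1}), while the paper applies Perron--Frobenius monotonicity to the irreducible nonnegative matrix in (\ref{eq3.2}). Both arguments rest on the same fact, namely that $s>0$ (Lemma~\ref{lem3.1}(ii)) makes $\phi$ strictly increasing in $v$.

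Everything from your second paragraph onward is an attempt to prove the literal (mis)statement $\rho(A)<\rho(A_1)$, which the paper never establishes and never needs. Your own observation that $\rho(A_1)<\phi(A)$ can occur is exactly the obstruction, and the proposed Collatz--Wielandt lift is only a sketch with no mechanism for producing a $\lambda>\rho(A)$: the parameters $c,v,s$ constrain only the first $c+1$ rows of $A_1$, so nothing prevents the remaining rows of $A_1$ from carrying far fewer $1$'s than those of $A$, driving $\rho(A_1)$ below $\rho(A)$. Drop that portion; the first paragraph suffices.
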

\begin{proof}
Since the second $2\times 2$ matrix in (\ref{eq3.2}) is irreducible, increasing
$v$ or $s$  while keeping the remaining three values unchanged will increase its spectral radius. So the lemma follows.
\end{proof}

For the matrix $A$ in the left of Figure~\ref{fig1}, we have $\rho(A)\leq \phi(A)$ by Lemma~\ref{lem3.1}(i). If we interchange values $a_{6,1}=1$ and $a_{5,3}=0$ of $A$ then we obtain the matrix $A_1\in \mathscr{S}^*(n)$ with $c_1=c(A_1)=4=c(A)$, $v_1=v(A_1)=3=v(A)+1$ and $s_1=s(A_1)=1=s(A)$, and
$\phi(A_1)>\phi_(A)$  by Proposition~\ref{prop3.2}.

For $A\in\mathscr{S}^*(n)$, let $\overbar{A}=(\overbar{a}_{ij})$ be the reflection of the matrix $J_n-I_n-A$ about the anti-diagonal line, i.e., $\overbar{A}$ is an $n\times n$ matrix with zero diagonal entries, and for distinct $i, j\in [n]$,
\begin{equation}\label{eq3.3}\overbar{a}_{ij}=1-a_{n-j+1, n-i+1}.\end{equation} This definition is slightly different to one might expect.
One reason is to let $\overbar{A}\in \mathscr{S}^*(n).$
The integral values $c=c(A), v=v(A), s=s(A), \overbar{c}=c(\overbar{A}), \overbar{v}=v(\overbar{A}), \overbar{s}=s(\overbar{A})$ are referred to as  the {\it parameters} of $A$.
We have seen the three parameters $(c, v, s)=(4, 2, 1)$ of $A$ in Figure~\ref{fig1}.  We try to find the remaining three parameters $(\overline{c}, \overline{v}, \overline{s})$ of $A$  without explicitly writing down the complement $\overline{A}$ of $A$ by reading $A$ leftward from the $a_{6, 6}$ position as row increasing of $\overline{A}$, and switching the role of the off-diagonal $0$'s and $1$'s of $A$. Since the zero in $a_{2, 5}$ will replace the one in $a_{4,3}$ while one zero in $a_{2, 6}$  still remains in the upper right corner entries $(i, j)$ with $1\leq i\leq 2$ and $3\leq j\leq 6$, we find $\overbar{c}=4$ and $\overline{s}=1$.  Since $a_{6, 2}=0$ and $a_{5, 2}=1$, we find $\overline{v}=1$.
Note that if we change the values in the submatrix of $A$ restricted to $\{5, 6\}\times \{1, 2\}$,
then the values $v$ and $\overbar{v}$ might change, but the values $c$, $\overbar{c}$, $s$ and $\overbar{s}$ remain the same. This is another reason for the above definition of $\overbar{A}$.

Note that if $a_{c+1,n-\overbar{c}}=1$, then $v\geq n-\overbar{c}$ and $\overbar{v}\leq n-c-1$; if $a_{c+1,n-\overbar{c}}=0$, then $\overbar{v}\geq n-c$ and $v\leq n-\overbar{c}-1$.

\begin{lem}\label{lem3.3}
If $A\in\mathscr{S}_s^*(n)$ has parameters $c=c(A)$, $v=v(A)$, $s=s(A)$,
$\overbar{c}=c(\overbar{A})$, $\overbar{v}=v(\overbar{A})$, $\overbar{s}=s(\overbar{A})$
with $c+\overbar{c}\geq n$ and $\overbar{v}<\min\{2\overbar{c}-\overbar{s},n-c-1\}$,  then
there exists $A_1\in \mathscr{S}^*(n)$ whose parameters $c_1=c(A_1)$, $v_1=v(A_1)$, $s_1=s(A_1)$, $\overbar{c}_1=c(\overbar{A}_1)$, $\overbar{v}_1=v(\overbar{A}_1)$, $\overbar{s}_1=s(\overbar{A}_1)$
satisfy $c_1=c$, $v_1=v$, $s_1=s$, $\overbar{c}_1=\overbar{c}$, $\overbar{v}_1=\min\{2\overbar{c}_1-\overbar{s}_1,n-c_1-1\}$ and $\overbar{s}_1\geq \overbar{s}$. Moreover, $\phi(A)=\phi(A_1)$ and $\phi(\overbar{A})<\phi(\overbar{A}_1)$.
\end{lem}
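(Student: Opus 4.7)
From $\overbar{v}<n-c-1$, the observation immediately before the lemma forces the first alternative, namely $a_{c+1,\,n-\overbar{c}}=1$; otherwise $\overbar{v}\geq n-c$ contradicts the hypothesis. Set $p:=n-\overbar{c}$, so $p\leq c$ and $a_{c+1,p}=1$. The symmetry of $A$ and the staircase condition imply that column $p$ contains $1$'s exactly at rows $\{1,\ldots,\ell_p\}\setminus\{p\}$ with $\ell_p\geq c+2$, because $c_p(A)=r_p(A)=(n-1)-\overbar{v}>c$.

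The construction of $A_1$ will modify $A$ only within the sub-block $R:=\{c+2,\ldots,n\}\times\{1,\ldots,p\}$. Any such change automatically preserves the row sums $r_1,\ldots,r_{c+1}$, which forces $c_1=c$, $v_1=v$, $s_1=s$ and hence $\phi(A_1)=\phi(A)$ by~\eqref{eq3.1}; it also preserves the column sums $c_{p+1},\ldots,c_n$, which yields $r_i(\overbar{A_1})=r_i(\overbar{A})$ for $i\leq\overbar{c}$, and therefore $\overbar{c}_1=\overbar{c}$ and $\overbar{s}_1=\overbar{s}$. The aim is to shrink $c_p(A_1)$ to the value $(n-1)-\overbar{v}_1$, where $\overbar{v}_1:=\min\{2\overbar{c}-\overbar{s},\,n-c-1\}$; by hypothesis $\overbar{v}$ is strictly less than both arguments of the minimum, so this achieves $\overbar{v}_1>\overbar{v}$.

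The concrete modification is to set $a_{ip}=0$ for a suitable subset of rows $i\in\{c+2,\ldots,\ell_p\}$. The staircase condition permits such a removal at row $i$ exactly when $m_i\leq p$, where $m_i$ denotes the largest column index of a $1$ in row $i$; otherwise a $1$ at $(i,j)$ with $j>p$ would re-force $a_{ip}=1$. Each row $i>c+1$ satisfies $m_i\leq v\leq c<i$, so the usual staircase argument yields $m_{c+2}\geq m_{c+3}\geq\cdots\geq m_n$, and the modifiable indices form a suffix $\{q+1,\ldots,\ell_p\}$ of $\{c+2,\ldots,\ell_p\}$. Setting $a_{ip}=0$ on the last $k$ such rows for $k\in\{0,1,\ldots,\ell_p-q\}$ realizes $c_p(A_1)$ as an arbitrary integer in $[q-1,\ell_p-1]$.

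The main obstacle is to verify that the target $(n-1)-\overbar{v}_1=\max\{n-1-2\overbar{c}+\overbar{s},\,c\}$ lies in $[q-1,\ell_p-1]$. The upper bound is immediate from $\overbar{v}_1>\overbar{v}$; the lower bound $q-1\leq \max\{n-1-2\overbar{c}+\overbar{s},\,c\}$ is trivial when $q\leq c+1$, and in the remaining case $q>c+1$, the presence of $q-c-1$ rows past $c+1$ with $r_i(A)>p=n-\overbar{c}$, combined with the symmetry $c_i(A)=r_i(A)$ and the defining equalities of $\overbar{c}$ and $\overbar{s}$, must be leveraged to pin down $q\leq n-2\overbar{c}+\overbar{s}$. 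Once this bound is settled, the parameter equalities verified above together with Proposition~\ref{prop3.2} applied to $\overbar{A}\in\mathscr{S}^*_s(n)$ and $\overbar{A_1}\in\mathscr{S}^*(n)$ (with matched $\overbar{c}$, strictly larger $\overbar{v}$, and unchanged $\overbar{s}$) yield $\phi(\overbar{A})<\phi(\overbar{A_1})$, completing the proof.
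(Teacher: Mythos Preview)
Your plan diverges from the paper's at a crucial point and leaves a genuine gap. By restricting the modification to column $p=n-\overbar{c}$ alone, you force $\overbar{s}_1=\overbar{s}$ and therefore aim for $\overbar{v}_1=\min\{2\overbar{c}-\overbar{s},\,n-c-1\}$ with the \emph{original} $\overbar{s}$. The lemma, however, only claims $\overbar{s}_1\geq\overbar{s}$ and $\overbar{v}_1=\min\{2\overbar{c}_1-\overbar{s}_1,\,n-c_1-1\}$, so you are attempting something strictly stronger. The obstacle you yourself flag---that when $q>c+1$ one must show $q\leq n-2\overbar{c}+\overbar{s}$---is equivalent (via $q=n-r_{\overbar{c}}(\overbar{A})$) to $r_{\overbar{c}}(\overbar{A})\geq 2\overbar{c}-\overbar{s}$, and you do not prove it; ``must be leveraged'' is a plan, not an argument. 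There is no evident mechanism forcing the $\overbar{c}$-th row sum of $\overbar{A}$ to meet this lower bound from the data you list, and the situation $q>c+1$ (i.e.\ $a_{c+2,\,p+1}=1$) is not a priori excluded by the hypotheses.

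The paper sidesteps this entirely. Its construction removes $1$'s at positions $(i,p)$ for $i\geq c+2$ \emph{and, when the staircase would otherwise break, also at positions $(i,j)$ with $i\geq c+2$ and $j\geq p+1$}. These extra removals in columns $j>p$ lower $c_j(A)$, hence raise $r_{n-j+1}(\overbar{A})$ for indices $n-j+1\leq\overbar{c}$, and so increase $\overbar{s}$. The process is run until the moving target $\overbar{v}_1=\min\{2\overbar{c}_1-\overbar{s}_1,\,n-c_1-1\}$ is hit. Allowing $\overbar{s}_1$ to grow is exactly what makes the target attainable without any bound on $q$, and Proposition~\ref{prop3.2} still yields $\phi(\overbar{A})<\phi(\overbar{A}_1)$ since $\overbar{s}_1\geq\overbar{s}$ is all it needs. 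Your more rigid scheme would, if the unproven bound held, give a slightly sharper conclusion ($\overbar{s}_1=\overbar{s}$), but that is not what the lemma asserts and not what the subsequent arguments require.
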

\begin{proof}
The matrix $A_1$ is obtained from $A$ by adding $\overbar{v}_1-\overbar{v}$ more $0$'s into the positions $(i, n-\overbar{c})$ for $n-\overbar{v}\geq i\geq c+2$, and if necessary, adding $0$'s into the positions $(i,j)$ for $i\geq c+2,j\geq n-\overbar{c}+1$ to keep $A_1\in \mathscr{S}^*(n)$.
This process will increase $\overbar{v}_1,\overbar{s}_1$, keep $c_1=c$, $v_1=v$ and $s_1=s$, and keep $\overbar{c}_1=c$ until $\overbar{v}_1=2\overbar{c}_1-\overbar{s}_1$ by the definition of $c,v,s,\overbar{c}$. Conduct the process until $\overbar{v}_1=\min\{2\overbar{c}_1-\overbar{s}_1,n-c_1-1\}$. Then $\phi(A)=\phi(A_1)$ and $\phi(\overbar{A})<\phi(\overbar{A}_1)$ by Proposition \ref{prop3.2}.
\end{proof}

From Lemma \ref{lem3.3}, sometimes we might assume $\overbar{v}=\min\{2\overbar{c}-\overbar{s},n-c-1\}$, and adjust $v$ to get a larger $\phi(A)+\phi(\overbar{A})$. Lemma \ref{lem3.4} and Lemma \ref{lem3.5} deal with the case $\overbar{v}=n-{c}-1$ and the case $\overbar{v}=2\overbar{c}-\overbar{s}$, respectively.

\begin{lem}\label{lem3.4}
If $A\in\mathscr{S}^*(n)$ has parameters $c=c(A)$, $v=v(A)$, $s=s(A)$,
$\overbar{c}=c(\overbar{A})$, $\overbar{v}=v(\overbar{A})$, $\overbar{s}=s(\overbar{A})$ satisfying $n-\overbar{c}\leq v'< v$ and $\overbar{v}=n-c-1$,  then there exists $A_2\in \mathscr{S}^*(n)$ whose parameters $c_2=c(A_2)$, $v_2=v(A_2)$, $s_2=s(A_2)$, $\overbar{c}_2=c(\overbar{A}_2)$, $\overbar{v}_2=v(\overbar{A}_2)$, $\overbar{s}_2=s(\overbar{A}_2)$
satisfy $c_2=c$, $v_2=v'$, $s_2+v_2=s+v$, $\overbar{c}_2=\overbar{c}$, $\overbar{v}_2=\overbar{v}$, $\overbar{s}_2=\overbar{s}$. Moreover, $\phi(A)\leq \phi(A_2)$,  $\phi(\overbar{A})= \phi(\overbar{A}_2)$, and 
 $\phi(A)=\phi(A_2)$  if and only if $v+s=2c$.
\end{lem}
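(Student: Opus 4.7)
The plan is to construct $A_2$ explicitly from $A$ by a sequence of $d$ modifications, where $d := v - v' \geq 1$, and then to verify the inequality by direct algebra. By the shape of $\mathscr{S}^*(n)$ combined with $v \leq c$ (Lemma~\ref{lem3.1}(ii)), the $1$'s in row $c+1$ of $A$ occupy exactly the columns $\{1, 2, \ldots, v\}$; since $v' \geq n - \overbar{c}$ and $v' < v$ force $v > n - \overbar{c}$, the tail columns $\{v'+1, \ldots, v\}$ all lie in the $\overbar{A}$-preserving range $\{n - \overbar{c}+1, \ldots, n\}$.

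The construction is: (i) remove the $d$ ones $a_{c+1, v'+1}, \ldots, a_{c+1, v}$; (ii) compensate by adding $d$ new ones at positions $(i_k, N_{i_k}+1)$, $k = 1, \ldots, d$, where $N_i$ denotes the largest column-index of a $1$ in row $i$ of the current matrix, and $i_k \leq c$ is chosen so that (a) the upper-left rectangle $[i_k] \times [N_{i_k}+1]$ minus the diagonal is already all-ones (so inserting $(i_k, N_{i_k}+1)$ preserves membership in $\mathscr{S}^*(n)$), and (b) $N_{i_k}+1 \in \{n - \overbar{c}+1, \ldots, n\}$. The deletions in (i) are always legal, and the structural content of the proof is the existence of $d$ valid insertion sites for (ii). This is guaranteed by the hypothesis $\overbar{v} = n - c - 1$: it forces $C_{n - \overbar{c}}(A) = c$, and combined with $v > n - \overbar{c}$ and the staircase structure, each column in the range $\{n - \overbar{c}, \ldots, v\}$ has column-sum $c$ and top row $c+1$; a careful look at the resulting profile among rows $1, \ldots, c$ yields $d$ extension sites in the preserve range.

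Once $A_2$ is built the parameter computation is bookkeeping. Row $c+1$ loses $d$ ones so $r_{c+1}(A_2) = v'$, rows $1, \ldots, c$ collectively gain $d$ ones, and using $s + v \leq 2c$ from Lemma~\ref{lem3.1}(ii) one reads off $c_2 = c$, $v_2 = v'$, $s_2 = s + (v - v')$, hence $s_2 + v_2 = s + v$. Column $n - \overbar{c}$ is untouched and the total mass of columns $\{n - \overbar{c}+1, \ldots, n\}$ is unchanged, so $C_{n - \overbar{c}}(A_2) = C_{n - \overbar{c}}(A)$ and $\sum_{j > n - \overbar{c}} C_j(A_2) = \sum_{j > n - \overbar{c}} C_j(A)$; these preserve $\overbar{c}_2 = \overbar{c}$, $\overbar{v}_2 = \overbar{v}$, $\overbar{s}_2 = \overbar{s}$, and $\phi(\overbar{A}) = \phi(\overbar{A}_2)$ is immediate from formula~(\ref{eq3.1}).

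For $\phi(A) \leq \phi(A_2)$, setting $x := 2c - v - 1$ and substituting $v_2 = v - d$, $s_2 = s + d$ into~(\ref{eq3.1}) reduces the inequality after a first squaring to $x + 2 \geq \sqrt{x^2 + 4s}$, and after a second squaring to $s \leq 2c - v$, which is Lemma~\ref{lem3.1}(ii); equality holds throughout iff $s = 2c - v$, i.e., $v + s = 2c$. The hardest --- and essentially only non-routine --- step is the existence of $d$ valid insertion sites in paragraph 2, which is where the hypothesis $\overbar{v} = n - c - 1$ is used essentially.
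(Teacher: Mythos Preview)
Your proof is correct and follows essentially the same approach as the paper's: relocate $1$'s from row $c+1$ into rows $1,\ldots,c$ within the column range $\{n-\overbar{c}+1,\ldots,n\}$, check that the six parameters behave as claimed, and reduce $\phi(A)\leq\phi(A_2)$ to the inequality $s\leq 2c-v$ of Lemma~\ref{lem3.1}(ii). The only cosmetic differences are that the paper first reduces to $v'=v-1$ (i.e.\ $d=1$) and phrases the last inequality via the sign of the quadratic $g_2(\phi(A))$ together with $\phi(A)\leq c$, rather than by squaring formula~(\ref{eq3.1}) twice; both routes are equivalent, and your sketch of why the insertion sites exist is, if anything, slightly more explicit than the paper's one-line ``move to the upper right region while keeping $A_2\in\mathscr{S}^*(n)$.''
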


\begin{proof}
To prove the lemma, it suffices to assume $v'=v-1$.
By Lemma \ref{lem3.1}(ii), $v\leq c$. From $n-\overbar{c}<v\leq c$, we have $c+\overbar{c}>n$. Then $\overbar{c}\geq 2$ since $c\leq n-1$, and $a_{c,n}=0$, otherwise $c+\overbar{c}\leq n$, a contradiction.
Let $A_2$ be a matrix obtained from $A$ by  moving the $1$ in the position $(c+1, v)$ to the upper right region while keeping $A_2\in \mathscr{S}^*(n).$ From the definition of $c$, we have $c_2=c$, $v_2=v'$ and $s_2+v_2=s+v$. The assumption $n-\overbar{c}\leq v'< v$ implies $\overbar{c}_2=\overbar{c}$, $\overbar{v}_2=\overbar{v}$ and $\overbar{s}_2=s$.

For the second part,
let $$g(x)=x^2-(v-1)x+(v-c)(c-1)-s,$$
$$g_2(x)=x^2-(v_2-1)x+(v_2-c)(c-1)-s_2.$$
Then $\phi(A)$ and $\phi(A_2)$ are the largest zero of $g(x)$ and $g_2(x)$, respectively. Note that
$$g_2(x)-g(x)=(v-v_2)x-(v-v_2)(c-1)+s-s_2$$
and $\phi(A)\leq c$ by Lemma \ref{lem3.1}(iii).
Then
$$g_2(\phi(A))\leq (v-v_2)\cdot c-(v-v_2)(c-1)+s-s_2=0$$
and $\phi(A)\leq \phi(A_2)$. If the equality holds, then $\phi(A)=c$ and $v+s=2c$ by Lemma \ref{lem3.1}(iii). If $v+s=2c$, then $\phi(A)=\phi(A_2)=c$ by Lemma \ref{lem3.1}(iii) again.
\end{proof}

In Figure~\ref{fig2} the matrix $A_2$ is obtained from $A$ by interchanging the values $a_{5,4}=1$
and $a_{1, 6}=0$. One can check from the definitions that  $A$ and $A_2$  have parameters $c=4=c_2$, $v=4=v_2+1$, $s=1=s_2-1$, $\overbar{c}=3=\overbar{c}_2$, $\overbar{v}=1=\overbar{v}_2$, $\overbar{s}=4=\overbar{s}_2$  and
$\phi(A)<\phi(A_2)$ by Lemma~\ref{lem3.4}.

\begin{figure}[h]
$$
A=\begin{array}{rl}
& \left.\begin{array}{cccccc} ~~&~~&~~&~v&~&~\end{array}\right.  \\
\begin{array}{r}
1\\
\downarrow\\
\\
c\\
\\
\\
\end{array}
&
\left(
\begin{array}{lccccc}
0&1&1&\multicolumn{1}{c|}{1} &1&0\\
1&0&1&\multicolumn{1}{c|}{1}&0&0\\
1&1&0&\multicolumn{1}{c|}{1}&0&0\\  \cline{4-6}
1&1&1&\multicolumn{1}{|c|}{0}&0&0 \\ \cline{1-4}
1&1&1&\multicolumn{1}{|c}{1}&0&0\\
0&0&0&\multicolumn{1}{|c}{0}&0&0
\end{array}\right) \\
&\begin{array}{cccc} ~~&~~&~~&~\overbar{c}~\leftarrow~1\end{array}
\end{array},~  A_2=\begin{array}{rl}
& \left.\begin{array}{cccccc} ~~&~~&~ v_2&~~~&~&~\end{array}\right.  \\
\begin{array}{r}
1\\
\downarrow\\
\\
c_2\\
\\
\\
\end{array}
&
\left(
\begin{array}{lccccc}
0&1&1&\multicolumn{1}{c|}{1} &1&1\\
1&0&1&\multicolumn{1}{c|}{1}&0&0\\
1&1&0&\multicolumn{1}{c|}{1}&0&0\\  \cline{4-6}
1&1&1&\multicolumn{1}{|c|}{0}&0&0 \\ \cline{1-4}
1&1&1&\multicolumn{1}{|c}{0}&0&0\\
0&0&0&\multicolumn{1}{|c}{0}&0&0
\end{array}\right) \\
&\begin{array}{cccc} ~~&~~&~~&~\overbar{c}_2~\leftarrow~1\end{array}
\end{array}
$$

\caption{The matrices $A, A_2$ with $c(A)=c(A_2)=4$ and $\phi(A)<\phi(A_2)$.}
\label{fig2}
\end{figure}

\begin{lem}\label{lem3.5}
If $A\in\mathscr{S}^*(n)$ has parameters $c=c(A)$, $v=v(A)$, $s=s(A)$,
$\overbar{c}=c(\overbar{A})$, $\overbar{v}=v(\overbar{A})$, $\overbar{s}=s(\overbar{A})$ satisfying $n-\overbar{c}< v$ and $\overbar{v}=2\overbar{c}-\overbar{s}$,  then there exists $A_2\in \mathscr{S}^*(n)$ whose parameters $c_2=c(A_2)$, $v_2=v(A_2)$, $s_2=s(A_2)$, $\overbar{c}_2=c(\overbar{A}_2)$, $\overbar{v}_2=v(\overbar{A}_2)$, $\overbar{s}_2=s(\overbar{A}_2)$
satisfy $c_2=c$, $v_2=\max\{2c_2-s_2,n-\overbar{c}\}$, $s_2+v_2\geq s+v$, $\overbar{c}_2=\overbar{c}$, $\overbar{v}_2=\overbar{v}$, $\overbar{s}_2=\overbar{s}$. Morover, $\phi(A)\leq \phi(A_2)$ and $\phi(\overbar{A})\leq \phi(\overbar{A}_2)$.
\end{lem}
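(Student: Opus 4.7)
The plan is to construct $A_2$ from $A$ in two stages so that $v_2 + s_2 = 2c$ (saturating the bound of Lemma~\ref{lem3.1}(ii)) and $v_2 = n - \overbar{c}$, while keeping $c$ and all three complement parameters $\overbar{c}, \overbar{v}, \overbar{s}$ unchanged. Combined with $s_2 + v_2 \leq 2c_2 = 2c$ from Lemma~\ref{lem3.1}(ii) applied to $A_2$, this target automatically yields $v_2 = \max\{2c - s_2, n - \overbar{c}\}$ and $s_2 + v_2 = 2c \geq s + v$.

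The first stage reduces $v$ to $n - \overbar{c}$ by iteratively relocating the $1$ at position $(c+1, v)$ to a position $(i, j)$ with $i \leq c$ and $j > v$, chosen so that $A \in \mathscr{S}^*(n)$ is maintained. Each step sends $(v, s) \mapsto (v - 1, s + 1)$ while $c$ is fixed. By the reflection formula (\ref{eq3.3}), this flips two entries of $\overbar{A}$: one $0 \to 1$ at $(n - v + 1, n - c)$ and one $1 \to 0$ at $(n - j + 1, n - i + 1)$. From $v > n - \overbar{c}$ I get $n - v + 1 \leq \overbar{c}$, and from $j > v > n - \overbar{c}$ also $n - j + 1 < \overbar{c}$, so both flips lie strictly within the top $\overbar{c}$ rows of $\overbar{A}$: row $\overbar{c}+1$ is untouched (ensuring $\overbar{v}_2 = \overbar{v}$), and $\sum_{k=1}^{\overbar{c}} r_k(\overbar{A})$ changes by $+1 - 1 = 0$ (ensuring $\overbar{s}_2 = \overbar{s}$ and hence $\overbar{c}_2 = \overbar{c}$).

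The second stage handles the residual deficiency $2c - (v + s)$: if $v + s < 2c$ after Stage~1, I add $1$'s at positions $(i, j)$ with $i \leq c$ and $j < n - \overbar{c}$, one at a time, maintaining the staircase shape of $\mathscr{S}^*(n)$. Each such addition increases $s$ by $1$ without touching $v$ or $c$; by (\ref{eq3.3}), the single flipped entry in $\overbar{A}$ sits in row $n - j + 1 > \overbar{c} + 1$, so $\overbar{c}, \overbar{v}, \overbar{s}$ are untouched. The hypothesis $\overbar{v} = 2\overbar{c} - \overbar{s}$ provides just enough slack for the $2c - (v + s)$ additions to exist within $\mathscr{S}^*(n)$.

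For the spectral radius, let $g(x) = x^2 - (v-1)x + (v-c)(c-1) - s$ and $g_2(x)$ be the analogous quadratic for $A_2$; by (\ref{eq3.2}), their largest roots are $\phi(A)$ and $\phi(A_2)$. Since $g(\phi(A)) = 0$,
\[ g_2(\phi(A)) = (v - v_2)(\phi(A) - c + 1) - (s_2 - s). \]
By Lemma~\ref{lem3.1}(iii), $c - 1 < \phi(A) \leq c$, so $\phi(A) - c + 1 > 0$ and $\phi(A) - c \leq 0$. Using $v - v_2 \leq s_2 - s$ (from $v_2 + s_2 \geq v + s$),
\[ g_2(\phi(A)) \leq (s_2 - s)(\phi(A) - c + 1) - (s_2 - s) = (s_2 - s)(\phi(A) - c) \leq 0, \]
since $s_2 \geq s$ in the construction; hence $\phi(A) \leq \phi(A_2)$. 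The complement inequality $\phi(\overbar{A}) \leq \phi(\overbar{A}_2)$ holds with equality because $\overbar{c}_2 = \overbar{c}, \overbar{v}_2 = \overbar{v}, \overbar{s}_2 = \overbar{s}$ determine $\phi(\overbar{A}_2)$ via (\ref{eq3.1}) identically. The main obstacle will be verifying Stage~2 rigorously --- specifically, guaranteeing that enough off-diagonal $0$-positions remain available in the block $[1, c] \times [1, n - \overbar{c} - 1]$ to absorb the deficit, which is precisely where the saturation hypothesis $\overbar{v} = 2\overbar{c} - \overbar{s}$ is essential.
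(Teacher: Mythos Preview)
Your construction has two genuine gaps, and they are related.

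\textbf{Stage 2 is vacuous.} The hypothesis $v>n-\overbar{c}$ means $a_{c+1,n-\overbar{c}}=1$ (row $c+1$ has its $1$'s in columns $1,\ldots,v$ and $n-\overbar{c}<v\leq c<c+1$). By the staircase condition this forces $a_{ij}=1$ for every off-diagonal position with $i\leq c+1$ and $j\leq n-\overbar{c}$. In particular the entire off-diagonal block $[1,c]\times[1,n-\overbar{c}-1]$ is already filled with $1$'s, both before and after Stage~1. There are no $0$'s left to flip, so Stage~2 cannot raise $s$ at all, and the saturation hypothesis $\overbar{v}=2\overbar{c}-\overbar{s}$ plays no role here.

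\textbf{Stage 1 preserves $s+v$, so you never reach $s_2+v_2=2c$.} Each of your Stage~1 moves sends $(v,s)\mapsto(v-1,s+1)$, so after Stage~1 you have $s_2+v_2=s+v$, which in general is strictly less than $2c$. But the conclusion $v_2=\max\{2c_2-s_2,\,n-\overbar{c}\}$ together with the universal bound $v_2\leq 2c_2-s_2$ from Lemma~\ref{lem3.1}(ii) forces $s_2+v_2=2c$. So your $A_2$ does not satisfy the lemma. (There is also a secondary issue: deleting only $a_{c+1,v}$ breaks the staircase whenever $a_{c+2,v}=1$; unlike in Lemma~\ref{lem3.4}, the hypothesis $\overbar{v}=2\overbar{c}-\overbar{s}$ does not rule this out.)

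The paper's proof fixes both problems with a single idea: at each step it relocates \emph{all} the $1$'s in positions $(i,v)$ with $i\geq c+1$ to the upper-right region. Removing an entire bottom segment of column $v$ respects the staircase, and moving the extra $1$'s from rows $c+2,c+3,\ldots$ into the first $c$ rows is precisely what makes $s+v$ increase strictly (by one for each such extra $1$) toward $2c$. Those extra $1$'s --- the ones your Stage~1 leaves in place --- are the only source of growth in $s+v$; your two-stage plan discards them and then looks for growth in a region that is already saturated.
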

\begin{proof}
Similar to the proof of Lemma \ref{lem3.4}, we have $c+\overbar{c}\geq n$, $\overbar{c}\geq 2$ and $a_{c,n}=0$.
Let $A_2$ be a matrix obtained from $A$ by moving the $1$'s in the positions $(i, v)$ for $i\geq c+1$ to the upper right region while keeping $A_2\in \mathscr{S}^*(n)$ until $v_2=2c_2-s_2$. From the definition of $c$, we have , $c_2=c$, $s_2\geq s$ and $s_2+v_2\geq s+v$. Additionally, $v_2=v-1$ if $v_2=2c_2-s_2$ does not hold. By Continuing this process, the result follows. The proof of the second part is similar to that of Lemma \ref{lem3.4}.
\end{proof}

\section{The complete split graphs $K_q\vee N_{n-q}$}

We study complete split graphs $K_q\vee N_{n-q}$ and their associated parameters in this section.

\begin{lem}\label{lem4.1}
Let $G$ be a graph whose adjacency matrix $A\in\mathscr{S}^*_s(n)$ has parameters $(c, v, s, \overbar{c}, \overbar{v}, \overbar{s})$.
If $\rho(A)+\rho(\overbar{A})= \phi(A)+\phi(\overbar{A})$,   then $G=K_q\vee N_{n-q}$ or $\overbar{G}=K_q\vee N_{n-q}$, and
\begin{equation}\label{eq4.1}\{ \overbar{c}, c\}=\left\{n-q-1,~\left\lceil\frac{q-1+\sqrt{(q-1)^2+4q(n-q)}}{2}\right\rceil\right\}.
\end{equation}
\end{lem}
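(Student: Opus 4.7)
The plan is to first collapse the equality hypothesis into two individual equalities. Lemma~\ref{lem3.1}(i) gives $\rho(A)\leq \phi(A)$; the same lemma applied to $\overbar{A}$, which lies in $\mathscr{S}_s^*(n)$ because the symmetry of $A$ carries through the definition (\ref{eq3.3}), yields $\rho(\overbar{A})\leq \phi(\overbar{A})$. The hypothesis $\rho(A)+\rho(\overbar{A})=\phi(A)+\phi(\overbar{A})$ therefore forces the two individual equalities $\rho(A)=\phi(A)$ and $\rho(\overbar{A})=\phi(\overbar{A})$.

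Next I would apply the equality characterization in Lemma~\ref{lem3.1}(i) to each of $A$ and $\overbar{A}$: up to isomorphism, each of $G$ and $\overbar{G}$ is either $K_{p+1}+N_{n-p-1}$ or $(K_{t-1}\vee N_{p+2-t})+N_{n-p-1}$ for appropriate $p,t$. By the observation recorded just after Lemma~\ref{lem2.1}, at least one of $G$ and $\overbar{G}$ is connected; by swapping the roles of $A$ and $\overbar{A}$ if necessary, assume $G$ is the connected one. Connectedness removes the isolated vertices, so $n-p-1=0$, i.e.\ $p=n-1$. The first form would give $G=K_n$, which is excluded because $A(K_n)\notin \mathscr{S}_s^*(n)$. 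The second form yields $G=K_{t-1}\vee N_{n+1-t}$, which is the complete split graph $K_q\vee N_{n-q}$ with $q:=t-1$, establishing the first conclusion.

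For (\ref{eq4.1}), assume without loss of generality that $G=K_q\vee N_{n-q}$. Listing the $q$ dominating vertices first, the sorted row-sum vector of $A$ is $(n-1,\ldots,n-1,q,\ldots,q)$ with $q$ leading entries. For $i\leq q$ the inequality $\sum_{j=1}^i r_j>i(i-1)$ reduces to $i<n$, which holds; for $i>q$ it becomes
\[
i^2-(q+1)i-q(n-q-1)<0,
\]
whose positive root is $i^{*}=\tfrac12\bigl((q+1)+\sqrt{(q+1)^2+4q(n-q-1)}\bigr)$. Using the identity $(q+1)^2+4q(n-q-1)=(q-1)^2+4q(n-q)$, this rewrites as $i^{*}-1=\tfrac12\bigl((q-1)+\sqrt{(q-1)^2+4q(n-q)}\bigr)$, so $c$, the largest integer strictly less than $i^{*}$, equals the stated ceiling $\lceil i^{*}-1\rceil$. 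A parallel computation on $\overbar{A}$, whose sorted row-sum vector I would read off via $\overbar{r}_i=(n-1)-r_{n-i+1}$ and find to be $(n-q-1,\ldots,n-q-1,0,\ldots,0)$ with $n-q$ leading entries, yields $\overbar{c}=n-q-1$ directly from the definition.

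The proof is essentially bookkeeping once Lemma~\ref{lem3.1}(i) is in hand. The only mildly delicate point is handling the strict inequality defining $c$: whether or not $i^{*}$ happens to be an integer, the largest integer $i<i^{*}$ coincides with $\lceil i^{*}-1\rceil$, so the ceiling in (\ref{eq4.1}) is correct in both regimes.
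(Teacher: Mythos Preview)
Your proof is correct and follows essentially the same route as the paper's: split the sum equality into two individual equalities via Lemma~\ref{lem3.1}(i), use the equality characterization there to identify $G$ (or $\overbar{G}$) as a complete split graph, and then compute $c$ and $\overbar{c}$ from the explicit row-sum vectors. The only cosmetic difference is that you invoke connectedness of one of $G,\overbar{G}$ to force $r_1=n-1$, whereas the paper simply asserts this can be assumed after interchanging $A$ and $\overbar{A}$; your treatment of the ceiling when $i^*$ is or is not an integer is also a touch more explicit than the paper's.
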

\begin{proof}
The assumption $\rho(A)+\rho(\overbar{A})= \phi(A)+\phi(\overbar{A})$ implies  $\rho(A)= \phi(A)$ and $\rho(\overbar{A})=\phi(\overbar{A})$ by Lemma~\ref{lem3.1}(i).
By interchanging $A$ and $\overbar{A}$ if necessary,  we might assume the first row sum of $A$ is $r_1=n-1$. By Lemma~\ref{lem3.1}(i) and the fact $J_n-I_n\not\in\mathscr{S}^*_s(n)$, there exists $2\leq t\leq c+1$ such that $r_1=r_{t-1}$
and $r_t=r_n$. This is equivalent to $G=K_q\vee N_{n-q}$ with $q=t-1$, and $\overbar{G}=N_q+K_{n-q}$. The latter implies $\overbar{c}=n-q-1$, and the former implies that
 $c$ is the largest integer with
$$
0<\sum_{i=1}^c (r_i-c+1)=(n-c)q+(q-c+1)(c-q)=-c^2+(q+1)c+q(n-q-1),$$ which implies that $c$ is  the other term in the set in (\ref{eq4.1}).
\end{proof}

Following the above proof, all the parameters of $A(K_q\vee N_{n-q})$ are listed below.
\begin{exam}\label{exam4.2}
Let  $G=K_q\vee N_{n-q}$ be the complete split graph with $q\in [n-2]$ and $A=A(G)\in \mathscr{S}^*_s(n)$. Then the condition for the equality in Theorem \ref{thm2.3} holds with $\ell=q+1$.
Hence \begin{equation}\label{eq4.2}\rho(A)=\frac{q-1+\sqrt{(q-1)^2+4q(n-q)}}{2},\quad \rho(\overbar{A})=n-q-1,\end{equation}
and $A$ has parameters $c=\lceil \rho(A)\rceil$, $v=q$, $s=(n-c)q+(q-c+1)(c-q)$, and  $\overbar{c}=\overbar{v}=\overbar{s}=n-q-1$.
\end{exam}

Let $\rho_0:=\rho(K_{\lfloor\frac{n}{3}\rfloor}\vee N_{n-\lfloor\frac{n}{3}\rfloor})$. As a remark, Aouchiche et al. \cite{abchrss:08} proved that the largest value of $\rho(G)+\rho(\overbar{G})$, where $G=K_q\vee N_{n-q}$, is attained if and only if $q=\left\lfloor \frac{n}{3}\right\rfloor$, and also
$q=\lceil \frac{n}{3}\rceil$ if $n\equiv 2\pmod 3$,
i.e., $G$ is the extremal graph in Conjecture \ref{conj}.

\begin{prop}\label{prop4.3} Let $G=K_q\vee N_{n-q}$ then $\rho_0=\rho(G)+\rho(\overbar{G})$ if $q=\left\lfloor \frac{n}{3}\right\rfloor$, and also $q=\lceil \frac{n}{3}\rceil$ if $n\equiv 2\pmod 3$. Indeed,
\begin{align}
\rho_0=& \frac{1}{2} \left( 2n-3-\left\lfloor \frac{n}{3} \right\rfloor + \sqrt{\left(\frac{2n-1}{3} + \left\lfloor \frac{n}{3} \right\rfloor \right)^2+ \frac{8k_n}{9}}\right) \label{eq4.3}\\
\geq& \frac{4n-5}{3}, \label{eq4.4}
\end{align}
where  $k_n=0$ if $n\equiv 2\pmod 3$ and $k_n=1$ otherwise.
\end{prop}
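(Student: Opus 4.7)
The plan is to prove Proposition~\ref{prop4.3} by direct computation from the closed-form expressions in Example~\ref{exam4.2}. First, summing the two formulas in (\ref{eq4.2}) gives, for any $q\in[n-2]$,
\[
f(q):=\rho(K_q\vee N_{n-q})+\rho(N_q+K_{n-q})=\tfrac{1}{2}\bigl(2n-q-3+\sqrt{(q-1)^2+4q(n-q)}\bigr).
\]

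Next I would substitute $q=\lfloor n/3\rfloor$. Writing $n=3m+r$ with $r\in\{0,1,2\}$ so that $q=m$, the discriminant becomes
\[
(m-1)^2+4m(2m+r)=9m^2+(4r-2)m+1,
\]
while
\[
\bigl(\tfrac{2n-1}{3}+m\bigr)^2=\tfrac{(9m+2r-1)^2}{9}=9m^2+(4r-2)m+\tfrac{(2r-1)^2}{9}.
\]
Their difference is $\frac{9-(2r-1)^2}{9}$, which equals $\frac{8}{9}$ when $r\in\{0,1\}$ and $0$ when $r=2$, matching $\frac{8k_n}{9}$; together with $2n-q-3=2n-3-\lfloor n/3\rfloor$, this yields formula (\ref{eq4.3}). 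For $n\equiv 2\pmod 3$, set $n=3m+2$ and verify separately that $q=\lceil n/3\rceil=m+1$ yields the same value: the discriminant is $m^2+4(m+1)(2m+1)=(3m+2)^2$, so $f(m+1)=\tfrac{1}{2}(6m+4-(m+1)-3+3m+2)=4m+1$, which coincides with $f(m)$ (where the radical in (\ref{eq4.3}) collapses to $3m+1$).

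Finally, to deduce (\ref{eq4.4}), let $Y:=\tfrac{2n-1}{3}+\lfloor n/3\rfloor>0$. Since $\tfrac{8k_n}{9}\ge 0$, we have $\sqrt{Y^2+\tfrac{8k_n}{9}}\ge Y$, and hence by (\ref{eq4.3}),
\[
\rho_0\;\ge\;\tfrac{1}{2}\bigl(2n-3-\lfloor n/3\rfloor+Y\bigr)\;=\;\tfrac{1}{2}\bigl(2n-3+\tfrac{2n-1}{3}\bigr)\;=\;\tfrac{4n-5}{3},
\]
with equality precisely when $k_n=0$, i.e.\ $n\equiv 2\pmod 3$. The entire argument is routine algebra once Example~\ref{exam4.2} is in hand; the main (mild) obstacle is simply keeping the three residue classes straight, which the $k_n$ shorthand resolves cleanly.
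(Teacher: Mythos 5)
Your proposal is correct and follows essentially the same route as the paper: both sum the closed-form expressions in (\ref{eq4.2}), split according to the residue of $n$ modulo $3$ (your unified $n=3m+r$ bookkeeping versus the paper's three explicit cases) to obtain (\ref{eq4.3}) and the coincidence of the two choices of $q$ when $n\equiv 2\pmod 3$, and then deduce (\ref{eq4.4}) by discarding the nonnegative term $\tfrac{8k_n}{9}$ under the square root. No gaps; the algebraic identities you verify are exactly the computation the paper leaves implicit.
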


\begin{proof} For $A=A(K_q\vee N_{n-q})$, computing  from (\ref{eq4.2}) according to each case $n=3k$, $3k+1$ or $3k+2$, we get the first part of
the result and \eqref{eq4.3}. The inequality in $(\ref{eq4.4})$ is obtained from (\ref{eq4.3}) by replacing the nonnegative integer $k_n$ by $0$.
\end{proof}

\section{The upper bound $c+\overbar{c}$}

Let $A=(a_{i,j})\in \mathscr{S}^*(n)$ with parameters $c=c(A)$, $v=v(A)$, $s=s(A)$, $\overbar{c}=c(\overbar{A}), \overbar{v}=v(\overbar{A}), \overbar{s}=s(\overbar{A})$. Combining Lemma~\ref{lem3.1} (i) and (iii), we have $\rho(A)+\rho(\overbar{A})\leq c+\overbar{c}$.
In this section, we study this upper bound $c+\overbar{c}$ of $\rho(A)+\rho(\overbar{A})$. It turns out that only two situations $c+\overbar{c}=\lfloor \frac{4n}{3} \rfloor$ and
$c+\overbar{c}=\lfloor \frac{4n}{3} \rfloor-1$ are of interest. In the end of this section, we prove Conjecture~\ref{conj} in the special case $(n, c+\overbar{c})=(3k+2, 4k+1).$
\medskip

Since $\overbar{a}_{\overbar{c}+1, n-c}=1-a_{c+1, n-\overbar{c}}$ by (\ref{eq3.3}),
by interchanging $A$ and $\overbar{A}$ if necessary, one might assume $a_{c+1, n-\overbar{c}}=1$ or equivalently $v\geq n-\overbar{c}$, or $\overbar{v}\leq n-c-1$, which will appear often thereafter.

\begin{lem}\label{lem5.1} With the notations above, let $c+\overbar{c}\geq n$. Then
\begin{equation}\label{eq5.1}
s+\overbar{s}\leq -\frac{3}{4}(c+\overbar{c})^2+(n+1)(c+\overbar{c})-\frac{1}{4}(c-\overbar{c})^2-v-\overbar{c},
\end{equation}
with the equality if $\overbar{v}=n-c-1$.
Moreover, we have
\begin{equation}\label{eq5.2}c+\overbar{c}<\frac{4n}{3}+\frac{1}{3}.\end{equation}
\end{lem}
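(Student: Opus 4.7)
The plan is to prove both \eqref{eq5.1} and \eqref{eq5.2} by counting the $1$-entries of $A$ in four carefully chosen regions and then exploiting the shape constraints together with the parameter definitions. Set $h:=n-\overbar{c}$, so the standing assumption $a_{c+1,n-\overbar{c}}=1$ becomes $v\ge h$, and note $h\le c$ since $c+\overbar{c}\ge n$. Partition the off-diagonal entries of $A$ by the row-split $\{[c],[c+1,n]\}$ and the column-split $\{[h],[h+1,n]\}$ into four regions, and let $t_1,t_2,t_3,t_4$ be the numbers of $1$'s in the top-left, top-right, bottom-left, and bottom-right regions, respectively. Counting by row yields $s=t_1+t_2-c(c-1)$. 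For $\overbar{s}$ I would unpack \eqref{eq3.3} to derive
$$\overbar{r}_i=(n-1)-c_{n-i+1}(A),$$
where $c_j(A)$ is the $j$-th column-sum of $A$; summing over $i\in[\overbar{c}]$ then gives $\overbar{s}=\overbar{c}(n-\overbar{c})-(t_2+t_4)$. Adding,
$$s+\overbar{s}=t_1-t_4+\overbar{c}(n-\overbar{c})-c(c-1).$$

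Next I would bound $t_1-t_4$ using the staircase shape. Let $f(i)$ denote the largest $j$ with $a_{ij}=1$; the shape condition makes $f$ non-increasing, and Lemma~\ref{lem3.1}(ii) gives $v\le c$, so $f(c+1)=v$. From $v\ge h$ and monotonicity, $f(i)\ge h$ for every $i\le c$, hence every off-diagonal entry of $[c]\times[h]$ equals $1$; deleting the $h$ diagonal cells yields $t_1=h(c-1)$. Meanwhile, row $c+1$ carries $1$'s in columns $1,\dots,v$ and thus contributes exactly $v-h$ ones to the bottom-right region (with no diagonal obstruction since $v<c+1$), giving $t_4\ge v-h$. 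Therefore $t_1-t_4\le hc-v$, and substituting $h=n-\overbar{c}$ into the displayed expression for $s+\overbar{s}$ reproduces the right-hand side of \eqref{eq5.1} after the routine algebraic check that
$$-c^2-c\overbar{c}-\overbar{c}^2+(n+1)c+n\overbar{c}-v=-\frac{3}{4}(c+\overbar{c})^2-\frac{1}{4}(c-\overbar{c})^2+(n+1)(c+\overbar{c})-v-\overbar{c}.$$

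For the equality claim when $\overbar{v}=n-c-1$: the formula for $\overbar{r}_{\overbar{c}+1}$ forces $c_h(A)=c$, but the $c$ rows in $\{1,\dots,c+1\}\setminus\{h\}$ already account for $c$ ones of column $h$, so every $i\ge c+2$ must satisfy $f(i)<h$. This wipes out the contributions of all rows past $c+1$ to $t_4$, hence $t_4=v-h$ and \eqref{eq5.1} is an equality.

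Finally, to derive \eqref{eq5.2}, apply $s+\overbar{s}\ge 2$ (Lemma~\ref{lem3.1}(ii)), $v\ge n-\overbar{c}$, and $(c-\overbar{c})^2\ge 0$ to \eqref{eq5.1}, obtaining
$$2\le -\frac{3}{4}(c+\overbar{c})^2+(n+1)(c+\overbar{c})-n,$$
equivalently $3x^2-4(n+1)x+4(n+2)\le 0$ with $x:=c+\overbar{c}$. The larger root of this quadratic is $(2(n+1)+2\sqrt{n^2-n-5})/3$, and the strict bound $\sqrt{n^2-n-5}<n-\frac{1}{2}$ (immediate from $(n-\frac{1}{2})^2=n^2-n+\frac{1}{4}$) yields $x<(4n+1)/3=\frac{4n}{3}+\frac{1}{3}$, proving \eqref{eq5.2}. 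The one delicate bookkeeping step is the extraction of $\overbar{r}_i=(n-1)-c_{n-i+1}(A)$ from the reversed-reflection definition \eqref{eq3.3}; once that identity is in hand, the two counts for $s$ and $\overbar{s}$ recombine cleanly and the remainder reduces to monotonicity of the staircase shape.
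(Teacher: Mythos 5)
Your argument is correct, and at its core it is the same double-counting of entries of $A$ via the staircase shape that the paper uses; the inequality you reach after the algebraic check is literally \eqref{eq5.1}, and your derivation of \eqref{eq5.2} (the quadratic $3x^2-4(n+1)x+4(n+2)\le 0$ in $x=c+\overbar{c}$ and the bound $\sqrt{n^2-n-5}<n-\tfrac12$) coincides with the paper's computation. The bookkeeping differs: the paper packs everything into one containment statement --- the $1$'s of the first $c+1$ rows and the off-diagonal $0$'s of the last $\overbar{c}$ columns are disjoint cell families inside a region with $n(n-1)-(n-c-1)(n-\overbar{c})$ off-diagonal cells --- whereas you write $s+\overbar{s}=t_1-t_4+\overbar{c}(n-\overbar{c})-c(c-1)$ as an exact identity and isolate all the slack in the single estimate $t_1-t_4\le hc-v$; this makes your equality analysis (pinning down $t_4=v-h$ from $c_h(A)=c$ when $\overbar{v}=n-c-1$) arguably more transparent than the paper's ``which cells could cause strictness'' discussion. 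One small point to flag: your evaluation $t_1=h(c-1)$ uses the standing convention $v\ge n-\overbar{c}$, while the paper's proof of \eqref{eq5.1} needs no such hypothesis; since \eqref{eq5.1} is \emph{not} symmetric under swapping $A$ and $\overbar{A}$ (the term $-v-\overbar{c}$ breaks the symmetry), it could not be rescued by a bare ``interchange WLOG''. This is harmless here both because the convention is in force in this section and because your bound survives without it ($t_1\le h(c-1)$ always, and $t_4\ge 0>v-h$ when $v<h$), but it is worth stating explicitly if you want the lemma in the same unconditional form as the paper's.
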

\begin{proof}
From the construction, the number of $1$'s (resp. of off-diagonal $0$'s) in the first $c+1$ rows (resp. the last $\overbar{c}$ columns) of $A$ is $c(c-1)+s+v$ (resp.
$\overbar{c}(\overbar{c}-1)+\overbar{s}$).
There are $n(n-1)-(n-c-1)(n-\overbar{c})$ off-diagonal entries in the first $c+1$ rows and in the last $\overbar{c}$ columns of $A$.  Hence we have
\begin{equation}\label{eq5.4}
c(c-1)+s+v+ \overbar{c}(\overbar{c}-1)+\overbar{s}\leq  n(n-1)-(n-c-1)(n-\overbar{c}).
\end{equation}
Deleting common terms in both sides of (\ref{eq5.4}) and using
$((c+\overbar{c})^2-(c-\overbar{c})^2)/4$ to replace $c\overbar{c}$, we immediately have (\ref{eq5.1}).
Since $c+\overbar{c}\geq n$, only an off-diagonal $0$'s  in the positions $(i, j)$ with $i\leq c+1$ and $j\leq n-\overbar{c}$, and  an $1$'s in the positions $(i, j)$ with
$i\geq c+2$ and $j\geq n-\overbar{c}+1$ could cause the  inequality in (\ref{eq5.4}) strict.
Hence if $\overbar{v}=n-c-1$, then $a_{c+1,n-\overbar{c}}=1$ and the equality in (\ref{eq5.4}) holds by the shape of $A\in \mathscr{S}^*(n)$.
\medskip

To prove the second statement, assume $v\geq n-\overbar{c}$ without loss of generality. By (\ref{eq5.1}) and using $s+\overbar{s}\geq 2$, we have
$$\frac{3}{4}(c+\overbar{c})^2 -(n+1)(c+\overbar{c})+n+2\leq -\frac{1}{4}(c-\overbar{c})^2\leq 0.$$
Hence
\begin{align*}
c+\overbar{c}\leq &\frac{2}{3} \left(n+1+\sqrt{(n+1)^2-3(n+2)}\right)\\
                 = &\frac{2}{3} \left(n+1+\sqrt{\left(n-\frac{1}{2}\right)^2-\frac{21}{4}}\right)<\frac{4n}{3}+\frac{1}{3}.
\end{align*}
\end{proof}

\begin{rem}
From \eqref{eq5.2}, we have $\rho(G)+\rho\left(\overbar{G}\right)\leq \frac{4n}{3}+\frac{1}{3}$ for all positive integers $n$, which proves the conjecture of Nikiforov \cite{n:07} that $\rho(G)+\rho\left(\overbar{G}\right)\leq \frac{4n}{3}+O(1)$.
\end{rem}

\begin{cor}\label{cor5.3}
If $c+\overbar{c}\geq \rho_0$,  then $c+\overbar{c}\geq n$, and
either $c+\overbar{c}=\lfloor \frac{4n}{3} \rfloor$ or $c+\overbar{c}=\lfloor \frac{4n}{3} \rfloor-1.$
\end{cor}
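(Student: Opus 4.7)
The plan is to sandwich the integer $c + \overbar{c}$ between the given lower bound $c + \overbar{c} \geq \rho_0$ and the upper bound $c + \overbar{c} < \frac{4n}{3} + \frac{1}{3}$ from Lemma~\ref{lem5.1}, and then verify that only two integers can lie in the resulting window. The one complication is that Lemma~\ref{lem5.1} is only available once we already know $c + \overbar{c} \geq n$, so that inequality has to be established first.

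For the first step I would argue by contradiction: if $c + \overbar{c} \leq n-1$, then the hypothesis forces $\rho_0 \leq n-1$. But Proposition~\ref{prop4.3} supplies $\rho_0 \geq \frac{4n-5}{3}$, and $\frac{4n-5}{3} > n-1$ is equivalent to $n > 2$, which holds by the standing assumption $n \geq 3$. This contradiction yields $c + \overbar{c} \geq n$, so Lemma~\ref{lem5.1} becomes applicable.

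Lemma~\ref{lem5.1} then gives $c + \overbar{c} < \frac{4n+1}{3}$; since $c + \overbar{c}$ is an integer, this is equivalent to $c + \overbar{c} \leq \lfloor 4n/3 \rfloor$. Combined with $c + \overbar{c} \geq \rho_0 \geq \frac{4n-5}{3}$, a short residue check on $n \bmod 3$ confirms that the only integers in $\bigl[\tfrac{4n-5}{3},\,\lfloor 4n/3 \rfloor\bigr]$ are $\lfloor 4n/3 \rfloor$ and $\lfloor 4n/3 \rfloor - 1$ (the interval has length at most $\tfrac{5}{3}$, so it contains at most two integers, and in each residue class one checks that both endpoints are indeed hit). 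I do not anticipate any real obstacle; the corollary is essentially a bookkeeping consequence of the bounds already proved. The only mild subtlety is the case $n \equiv 2 \pmod 3$, in which $\rho_0 = \frac{4n-5}{3} = \lfloor 4n/3 \rfloor - 1$ is itself an integer, so the lower endpoint of the interval is attained exactly rather than strictly and one must use $c + \overbar{c} \geq \rho_0$ rather than a strict inequality at that spot.
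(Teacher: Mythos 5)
Your proposal is correct and follows essentially the same route as the paper: the paper also deduces $c+\overbar{c}\geq n$ from $c+\overbar{c}\geq\rho_0\geq\frac{4n-5}{3}$ together with integrality (via $\lceil\frac{4n-5}{3}\rceil\geq n$ for $n\geq 3$), and then sandwiches the integer $c+\overbar{c}$ between $\frac{4n-5}{3}$ and the bound $\frac{4n}{3}+\frac{1}{3}$ of Lemma~\ref{lem5.1} to conclude that it equals $\lfloor\frac{4n}{3}\rfloor$ or $\lfloor\frac{4n}{3}\rfloor-1$. Your handling of the $n\equiv 2\pmod 3$ endpoint case is consistent with this.
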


\begin{proof} Note that $c+\overbar{c}\geq \rho_0\geq\frac{4n}{3}-\frac{5}{3}$. Then $c+\overbar{c}\geq \lceil\frac{4n}{3}-\frac{5}{3}\rceil\geq n$ for $n\geq 3$.
By (\ref{eq4.4}), the assumption and (\ref{eq5.2}), we have
$$\frac{4n}{3}-\frac{5}{3}\leq  c+\overbar{c} <\frac{4n}{3}+\frac{1}{3}.$$
Since $c+\overbar{c}$ is an integer, either $c+\overbar{c}=\lfloor \frac{4n}{3} \rfloor$ or $c+\overbar{c}=\lfloor \frac{4n}{3} \rfloor-1.$
\end{proof}

Since $c+\overbar{c}$ is an integer, there are the following six cases:
$(n, c+\overbar{c})=(3k, 4k-1)$, $(3k, 4k)$, $(3k+1, 4k)$, $(3k+1, 4k+1)$, $(3k+2, 4k+1)$, or  $(3k+2, 4k+2).$ The following proposition proves Conjecture~\ref{conj} in the special case $(n, c+\overbar{c})=(3k+2, 4k+1)$.

\begin{prop}\label{prop5.4}  If $A\in \mathscr{S}_s^*(n)$, $\rho(A)+\rho(\overbar{A})\geq \rho_0$,  $v\geq n-\overbar{c}$,  and $(n, c+\overbar{c})=(3k+2, 4k+1)$, then $A=A(K_{2k+2}+ N_{k})$ or
$A=A(K_{2k+1}+ N_{k+1})$.
\end{prop}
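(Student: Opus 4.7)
The plan is to first pin down $\rho_0$ in this case. Since $n\equiv 2\pmod 3$ we have $k_n=0$ and $\lfloor n/3\rfloor=k$, so a direct computation from \eqref{eq4.3} yields $\rho_0=4k+1$. Combining this with
$$4k+1=\rho_0\leq \rho(A)+\rho(\overbar A)\leq \phi(A)+\phi(\overbar A)\leq c+\overbar c=4k+1$$
forces every inequality to be an equality. In particular $\rho(A)=\phi(A)=c$ and $\rho(\overbar A)=\phi(\overbar A)=\overbar c$, and Lemma~\ref{lem3.1}(iii) then gives the identities $s+v=2c$ and $\overbar s+\overbar v=2\overbar c$.

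Next I would apply Lemma~\ref{lem4.1}: since $\rho(A)+\rho(\overbar A)=\phi(A)+\phi(\overbar A)$, either $G=K_q\vee N_{n-q}$ or $\overbar G=K_q\vee N_{n-q}$ for some $q$. Example~\ref{exam4.2} shows that in the first case $v=q$ and $\overbar c=n-q-1$, hence $v=n-\overbar c-1<n-\overbar c$, contradicting the hypothesis $v\geq n-\overbar c$. So the second case must hold: $G=N_q+K_{n-q}$, and in the ordering making $A\in\mathscr{S}^*_s(n)$ we have $A=A(K_{n-q}+N_q)$. A direct row-sum computation gives $c=v=s=n-q-1$, while applying Example~\ref{exam4.2} to $\overbar A=A(K_q\vee N_{n-q})$ yields $\overbar v=q$ and $\overbar s=(n-\overbar c)q+(q-\overbar c+1)(\overbar c-q)$.

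Finally I would determine $q$. The equation $c+\overbar c=4k+1$ combined with $c=3k+1-q$ forces $\overbar c=k+q$, so $n-\overbar c=2k+2-q$, $q-\overbar c+1=1-k$, and $\overbar c-q=k$. Substituting these along with $\overbar v=q$ into $\overbar v+\overbar s=2\overbar c$ and simplifying yields the quadratic $q^2-(2k+1)q+k(k+1)=0$, which factors as $(q-k)(q-k-1)=0$. Hence $q=k$ or $q=k+1$, corresponding to $A=A(K_{2k+2}+N_k)$ or $A=A(K_{2k+1}+N_{k+1})$, respectively. The main subtlety is the bookkeeping between $A$ and $\overbar A$: the hypothesis $v\geq n-\overbar c$ is precisely what singles out the ``disjoint union'' side of Lemma~\ref{lem4.1}, after which the rest is a short algebraic calculation.
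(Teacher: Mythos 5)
Your proof is correct, and its skeleton is the same as the paper's: you use the sandwich $\rho_0\leq\rho(A)+\rho(\overbar{A})\leq\phi(A)+\phi(\overbar{A})\leq c+\overbar{c}=4k+1$ to force $\rho(A)=\phi(A)=c$ and $\rho(\overbar{A})=\phi(\overbar{A})=\overbar{c}$, and then Lemma~\ref{lem4.1} together with the hypothesis $v\geq n-\overbar{c}$ (which, via Example~\ref{exam4.2}, rules out $G$ itself being the split graph) to conclude $A=A(K_{c+1}+N_{n-c-1})$. The only place you diverge is the final determination of the clique size: the paper computes $s+\overbar{s}=5k+1$ and feeds it into the counting inequality \eqref{eq5.1} to force $(c-\overbar{c})^2\leq 1$, hence $(c,\overbar{c})\in\{(2k+1,2k),(2k,2k+1)\}$, whereas you substitute Example~\ref{exam4.2}'s formula for $\overbar{s}$ into the equality condition $\overbar{v}+\overbar{s}=2\overbar{c}$ from Lemma~\ref{lem3.1}(iii) (i.e., the integrality $\rho(\overbar{A})=\overbar{c}$) and solve the quadratic $(q-k)(q-k-1)=0$. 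Both finishes are short and valid; yours makes the integrality of $\rho(\overbar{A})$ explicit and avoids \eqref{eq5.1} at that step, while the paper's avoids the explicit expression for $\overbar{s}$. I verified your algebra ($\overbar{c}=k+q$, $\overbar{s}=(2k+2-q)q+(1-k)k$, leading to $q^2-(2k+1)q+k(k+1)=0$), and it checks out.
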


\begin{proof}
Applying (\ref{eq4.4}), Lemma~\ref{lem3.1}(i),(iii) and the assumptions,
$$4k+1=\frac{4n-5}{3}\leq \rho_0\leq \rho(A)+\rho(\overbar{A})\leq \phi(A)+\phi(\overbar{A})\leq c+\overbar{c}=4k+1.$$
Hence $\rho(A)= \phi(A)=c$ and $\rho(\overbar{A})=\phi(\overbar{A})=\overbar{c}$, which implies $A=A(K_{c+1}+ N_{n-c-1})$ by Lemma~\ref{lem4.1}. It remains to determine the value $c+1$.
{Note that $v=c$, $\overbar{v}=n-c-1$ and $s=2c-v$, $\overbar{s}=2\overbar{c}-\overbar{v}$. Then $s+\overbar{s}=5k+1$. By \eqref{eq5.1}, $5k+1\leq 5k+\frac{5}{4}-\frac{1}{4}(c-\overbar{c})^2$. Then $(c-\overbar{c})^2\leq 1$, and $(c,\overbar{c})=(2k,2k+1)$ or $(c,\overbar{c})=(2k+1,2k)$. Hence $A=A(K_{2k+2}+ N_{k})$ or
$A=A(K_{2k+1}+ N_{k+1})$.
}
\end{proof}

\section{The upper bound $\phi(A)+\phi(\overbar{A})$}\label{s6}

Let $A=(a_{i,j})\in \mathscr{S}^*(n)$ with parameters $c=c(A)$, $v=v(A)$, $s=s(A)$, $\overbar{c}=c(\overbar{A}), \overbar{v}=v(\overbar{A}), \overbar{s}=s(\overbar{A})$.
We will study the upper bound
\begin{equation}\label{eq6.1}
\phi(A)+\phi(\overbar{A})
=\frac{1}{2}\left(v+\overbar{v}-2+\sqrt{E}+\sqrt{F}\right)
\end{equation}
 of $\rho(A)+\rho(\overbar{A})$ described in Lemma~\ref{lem3.1}(i),
 where
 \begin{equation}\label{eq6.2}
 E=(2c-v-1)^2+4s,\quad  F=(2\overbar{c}-\overbar{v}-1)^2+4\overbar{s}.
 \end{equation}

Since the case $(n, c+\overbar{c})=(3k+2, 4k+1)$ has been treated in Proposition~\ref{prop5.4},
we only consider the following $5$  cases
$(n, c+\overbar{c})=(3k, 4k-1)$, $(3k, 4k)$, $(3k+1, 4k)$, $(3k+1, 4k+1)$ or  $(3k+2, 4k+2),$
where $k\in \mathbb{N}$. In the end of this section, Conjecture~\ref{conj} is essentially proved except for the case $(n, c, \overbar{c})=(3k+2, 2k+1, 2k+1)$.  To reduce the complexity of computation, we shall assume $v=n-\overbar{c}$ and $\overbar{v}=n-c-1$ according to Lemma \ref{lem3.3} and Lemma \ref{lem3.4}. The details will be given in the next section. To do this, we need to exclude a special case $2\overbar{c}-\overbar{s}<n-c-1$. In this case $\overbar{v}$ can not be $n-c-1$. By Lemma \ref{lem3.3}, we might assume $\overbar{v}=2\overbar{c}-\overbar{s}$ in this case.
\begin{prop}\label{prop6.1}
If $A\in\mathscr{S}^*$, $(n,c+\overbar{c})\ne (3k+2,4k+1)$, $2\overbar{c}-\overbar{s}<n-c-1$ and $\overbar{v}=2\overbar{c}-\overbar{s}$, then $\phi(A)+\phi(\overbar{A})<\rho_0$.
\end{prop}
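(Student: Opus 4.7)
The plan is as follows. First, the hypothesis $\overbar{v}=2\overbar{c}-\overbar{s}$ is the equality case $\overbar{v}+\overbar{s}=2\overbar{c}$, so Lemma~\ref{lem3.1}(iii) applied to $\overbar{A}$ gives $\phi(\overbar{A})=\overbar{c}$, and the goal reduces to proving $\phi(A)<\rho_{0}-\overbar{c}$. The strict inequality $\overbar{v}<n-c-1$, combined with the dichotomy stated just before Lemma~\ref{lem3.3}, forces $a_{c+1,\,n-\overbar{c}}=1$ and hence $v\geq n-\overbar{c}$, which makes Lemma~\ref{lem5.1} applicable. If $c+\overbar{c}<\rho_{0}$, then $\phi(A)+\phi(\overbar{A})\leq c+\overbar{c}<\rho_{0}$ by Lemma~\ref{lem3.1}(iii) and we are done; otherwise, Corollary~\ref{cor5.3}, together with the hypothesis $(n,c+\overbar{c})\neq (3k+2,4k+1)$, restricts $(n,c+\overbar{c})$ to one of the five sub-cases listed at the start of Section~\ref{s6}.

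Second, the assumption $\overbar{v}=2\overbar{c}-\overbar{s}<n-c-1$ and the integrality of $\overbar{s}$ yield $\overbar{s}\geq \overbar{c}+(c+\overbar{c})-n+2$. Write $p=c+\overbar{c}$ and $d=c-\overbar{c}$. Combining this lower bound with $s\geq 1$ (Lemma~\ref{lem3.1}(ii)) and the Lemma~\ref{lem5.1} bound evaluated at the minimum allowed $v=n-\overbar{c}$, and then substituting $\overbar{c}=(p-d)/2$, collapses everything to the single constraint
\[
(d-1)^{2} \;\leq\; p(4n-3p-2)-11.
\]
A direct calculation shows the right-hand side is negative for each of the three sub-cases $(n,p)=(3k,4k)$, $(3k+1,4k+1)$, and $(3k+2,4k+2)$, so these sub-cases are vacuous and the proposition holds in them automatically.

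For the two remaining sub-cases $(n,p)=(3k,4k-1)$ and $(3k+1,4k)$, the plan is to prove $\phi(A)+\overbar{c}<\rho_{0}$ directly. Setting $\mu:=\rho_{0}-\overbar{c}$, the algebraic identity
\[
(2\mu-v+1)^{2}-(2c-v-1)^{2} \;=\; 4(\mu+c-v)(\mu-c+1)
\]
together with the positivity of $\mu-c+1=\rho_{0}-p+1$ (which is easily verified in both sub-cases) reduces the target inequality, after isolating the square root in the formula for $\phi(A)$ and squaring, to the strict bound
\[
s \;<\; (\rho_{0}+d-v)(\rho_{0}-p+1).
\]
I would then substitute the Lemma~\ref{lem5.1} upper bound on $s$ and the explicit form of $\rho_{0}$ given by Proposition~\ref{prop4.3}; after one further squaring to eliminate the $\sqrt{9k^{2}\pm 2k+1}$ appearing in $\rho_{0}$, the problem becomes a polynomial inequality in $k$, $d$, and $v$ that can be confirmed by elementary estimates. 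I expect the main obstacle to lie precisely in this final polynomial verification: the parity of $d$ and the admissible range of $v$ depend on the sub-case, so the check has to be carried out with care across all corner values of $(k,d,v)$ permitted by the constraints above.
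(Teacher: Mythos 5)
Your reductions are sound and, up to packaging, coincide with the paper's own argument: $\phi(\overbar{A})=\overbar{c}$ from $\overbar{v}+\overbar{s}=2\overbar{c}$ via Lemma~\ref{lem3.1}(iii), the deduction $v\geq n-\overbar{c}$ from $\overbar{v}<n-c-1$, the restriction via Corollary~\ref{cor5.3} to the five values of $(n,c+\overbar{c})$, and the elimination of $(3k,4k)$, $(3k+1,4k+1)$, $(3k+2,4k+2)$ by combining Lemma~\ref{lem5.1} with $s\geq 1$ and $\overbar{s}\geq c+2\overbar{c}-n+2$. Your single inequality $(d-1)^2\leq p(4n-3p-2)-11$ is exactly the paper's case-by-case bound \eqref{eq6.3} in compressed form, and its right-hand side is indeed $-8k-11$, $-4k-12$ and $-11$ in those three cases, so that part is correct.

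The gap is the endgame in the two surviving cases $(3k,4k-1)$ and $(3k+1,4k)$. What you actually establish there is only the (correct) reduction of the goal to $s<(\rho_0+d-v)(\rho_0-p+1)$, the squaring being justified since $\rho_0-p+1>0$ and $v\leq c$. The decisive step --- substituting the upper bound on $s$, the lower bound on $\overbar{s}$ and the exact value of $\rho_0$, then verifying the resulting polynomial inequality in $(k,d,v)$ after a second squaring --- is announced but not carried out, and you yourself flag it as the main obstacle; since these two cases are the entire content of the proposition once the other three are shown vacuous, the proof is incomplete precisely where the work lies. The inequality you need does appear to hold (the binding case is $v=n-\overbar{c}$, because raising $v$ by one lowers the admissible $s$ by one while lowering your target by only $\rho_0-p+1<1$, and at $v=n-\overbar{c}$ the margin stays of constant size for all admissible $d$), so the route should close, but as written it is a plan rather than a proof. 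Note that the paper sidesteps both sources of your difficulty: it first invokes Lemma~\ref{lem3.5} to rule out $v=2c-s$ and normalize $v=n-\overbar{c}$, and then compares $\phi(A)+\overbar{c}$ not with $\rho_0$ itself but with the rational lower bound $(4n-5)/3\leq\rho_0$ from \eqref{eq4.4}, so its final check collapses to the single radical-free inequality $\left(\tfrac{5n-7}{3}-\overbar{c}\right)^2>(2c+\overbar{c}-n-1)^2+4s$, i.e.\ two easy quadratics in $c-\overbar{c}$ with no residual dependence on $v$. Adopting that comparison point would turn your deferred verification into a short computation.
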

\begin{proof}
Note that $\overbar{v}=2\overbar{c}-\overbar{s}<n-c-1$ implies $v\geq n-\overbar{c}$.
Suppose $\phi(A)+\phi(\overbar{A})\geq\rho_0$. Then $c+\overbar{c}\geq n$ by Corollary \ref{cor5.3}.
We first find an upper bound of $s$. From $2\overbar{c}-s<n-c-1$, we have $$\overbar{s}>c+2\overbar{c}-n+1=\displaystyle \frac{3}{2}(c+\overbar{c})-\frac{1}{2}(c-\overbar{c})-n+1.$$
By \eqref{eq5.1},

\begin{eqnarray}
s+\overbar{s}&\leq& \displaystyle-\frac{3}{4}(c+\overbar{c})^2+(n+1)(c+\overbar{c})-\frac{1}{4}(c-\overbar{c})^2-v-\overbar{c}\nonumber \medskip\\
&\leq& \displaystyle-\frac{3}{4}(c+\overbar{c})^2+(n+1)(c+\overbar{c})-\frac{1}{4}(c-\overbar{c})^2-n\nonumber \medskip\\
&=& \left\{\begin{array}{ll}
\displaystyle 4k-\frac{7}{4}-\frac{1}{4}(c-\overbar{c})^2,&\text{if $(n,c+\overbar{c})=(3k,4k-1)$};\medskip\\
\displaystyle k-\frac{1}{4}(c-\overbar{c})^2,&\text{if $(n,c+\overbar{c})=(3k,4k)$};\medskip\\
\displaystyle 5k-1-\frac{1}{4}(c-\overbar{c})^2,&\text{if $(n,c+\overbar{c})=(3k+1,4k)$};\medskip\\
\displaystyle 2k+\frac{1}{4}-\frac{1}{4}(c-\overbar{c})^2,&\text{if $(n,c+\overbar{c})=(3k+1,4k+1)$};\medskip\\
\displaystyle 3k+1-\frac{1}{4}(c-\overbar{c})^2,&\text{if $(n,c+\overbar{c})=(3k+2,4k+2)$}.
\end{array}\right. \label{eq6.3}
\end{eqnarray}
Then
$$s<\left\{\begin{array}{ll}
\displaystyle k-\frac{5}{4}-\frac{1}{4}(c-\overbar{c})^2+\frac{1}{2}(c-\overbar{c}),& \text{if $(n,c+\overbar{c})=(3k,4k-1)$}; \medskip\\
\displaystyle -2k-1-\frac{1}{4}(c-\overbar{c})^2+\frac{1}{2}(c-\overbar{c}),& \text{if $(n,c+\overbar{c})=(3k,4k)$}; \medskip\\
\displaystyle 2k-1-\frac{1}{4}(c-\overbar{c})^2+\frac{1}{2}(c-\overbar{c}),& \text{if $(n,c+\overbar{c})=(3k+1,4k)$}; \medskip\\
\displaystyle -k-\frac{5}{4}-\frac{1}{4}(c-\overbar{c})^2+\frac{1}{2}(c-\overbar{c}),& \text{if $(n,c+\overbar{c})=(3k+1,4k+1)$}; \medskip\\
\displaystyle -1-\frac{1}{4}(c-\overbar{c})^2+\frac{1}{2}(c-\overbar{c}),& \text{if $(n,c+\overbar{c})=(3k+2,4k+2)$}.
\end{array}\right.$$
Since $-\frac{1}{4}(c-\overbar{c})^2+\frac{1}{2}(c-\overbar{c})\leq\frac{1}{4}$ and $s>0$, we need only to consider the cases $(n,c+\overbar{c})=(3k,4k-1)$ and $(n,c+\overbar{c})=(3k+1,4k)$.

From $\overbar{v}<n-c-1$ and $c+\overbar{c}\geq n$, we have $\overbar{v}<\overbar{c}-1<\overbar{c}$. By Lemma \ref{lem3.5}, we might assume $v=\max\{2c-s,n-\overbar{c}\}$. We shell show that $v=2c-s$ is impossible. Suppose $v=2c-s$. From \eqref{eq5.1}, we have
$$2c-v+2\overbar{c}-\overbar{v}=s+\overbar{s}\leq -\frac{3}{4}(c+\overbar{c})^2+(n+1)(c+\overbar{c})-\frac{1}{4}(c-\overbar{c})^2-v-\overbar{c}.$$
Then
$$c+\overbar{c}\leq\frac{2}{3}\left(n-1+\sqrt{(n-1)^2-3\left[\frac{1}{4}(c-\overbar{c})^2+(\overbar{c}-\overbar{v})\right]}\right)<\frac{4n-4}{3},$$
contradict to $(n,c+\overbar{}c)=(3k,4k-1)$ or $(3k+1,4k)$. Hence $v=n-\overbar{c}$.
Therefore, $\phi(\overbar{A})=\overbar{c}$ by Lemma \ref{lem3.1}(iii), and
$$\begin{array}{lll}
\phi(A)+\phi(\overbar{A})
&=&\frac{1}{2}\left(n-\overbar{c}-1+\sqrt{(2c-(n-\overbar{c})-1)^2+4s}\right)+\overbar{c}\\
&=&\frac{1}{2}\left(n+\overbar{c}-1+\sqrt{(2c-(n-\overbar{c})-1)^2+4s}\right).
\end{array}$$
Note that
$$\begin{array}{lll}
\rho_0-(\phi(A)+\phi(\overbar{A}))&\geq& \displaystyle\frac{4n-5}{3}-(\phi(A)+\phi(\overbar{A}))\medskip\\
&=&\displaystyle\frac{1}{2}\left(\frac{5n-7}{3}-\overbar{c}-\sqrt{(2c+\overbar{c}-n-1)^2+4s}\right).
\end{array}$$
It suffices to show
$$\left(\frac{5n-7}{3}-\overbar{c}\right)^2>(2c+\overbar{c}-n-1)^2+4s.$$
Note that $\overbar{c}=-(c+\overbar{c})/2+(c-\overbar{c})/2$ and $2c+\overbar{c}=3(c+\overbar{c})/2+(c-\overbar{c})/2$. Then
\begin{align*}
&\left(\frac{5n-7}{3}-\overbar{c}\right)^2-((2c+\overbar{c}-n-1)^2+4s)\\
>&\left\{\begin{array}{ll}
\displaystyle(c-\overbar{c})^2-\frac{4}{3}(c-\overbar{c})+\frac{19}{9},& \text{if $(n,c+\overbar{c})=(3k,4k-1)$}\medskip\\
\displaystyle(c-\overbar{c})^2-\frac{2}{3}(c-\overbar{c})+\frac{4}{9},& \text{if $(n,c+\overbar{c})=(3k+1,4k)$}
\end{array}\right.\\
>&0,
\end{align*}
completing the proof.
\end{proof}

\begin{lem}\label{lem6.2}
Referring to the notations in (\ref{eq6.1})-(\ref{eq6.2}), $\phi(A)+\phi(\overbar{A})$ is a zero of the following  quartic polynomial
$$g(x)=(2x-v-\overbar{v}+2)^2((2x-v-\overbar{v}+2)^2-2(E+F))+(E-F)^2.$$
\end{lem}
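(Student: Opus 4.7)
The plan is to simply rewrite $\phi(A)+\phi(\overbar{A})$ in a form that makes $g(x)$ vanish by repeated squaring.

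First, set $y:=\phi(A)+\phi(\overbar{A})$ and $T:=2y-v-\overbar{v}+2$. From the expression \eqref{eq6.1}, we have
$$T=2y-v-\overbar{v}+2=\sqrt{E}+\sqrt{F},$$
so $T$ is (by construction) the sum of the two square roots appearing in $\phi(A)$ and $\phi(\overbar{A})$.

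Next, I would square once:
$$T^{2}=E+F+2\sqrt{EF},$$
rearrange to isolate the remaining radical as $T^{2}-(E+F)=2\sqrt{EF}$, and square again to obtain
$$\bigl(T^{2}-(E+F)\bigr)^{2}=4EF.$$
Expanding the left side gives $T^{4}-2T^{2}(E+F)+(E+F)^{2}=4EF$, and since $(E+F)^{2}-4EF=(E-F)^{2}$, this rearranges as
$$T^{2}\bigl(T^{2}-2(E+F)\bigr)+(E-F)^{2}=0.$$
Substituting $T=2y-v-\overbar{v}+2$ back in shows $g(y)=0$, which is the claim.

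There is no real obstacle here; the identity is a routine ``double-squaring'' computation of the kind used to rationalize $\sqrt{E}+\sqrt{F}$. The only point worth noting is that squaring is a one-way operation, so $g$ may well have extraneous zeros (corresponding to the sign choices $\pm\sqrt{E}\pm\sqrt{F}$); the lemma only asserts that $\phi(A)+\phi(\overbar{A})$ is \emph{a} zero, not the unique or largest one. This is presumably why the statement is phrased as ``is a zero of'' rather than as an identification of a specific root, and subsequent use of the lemma will presumably need to pin down which root of $g$ is the relevant one.
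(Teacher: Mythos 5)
Your proof is correct and is exactly the ``direct computation from (6.1)'' that the paper invokes: isolating $\sqrt{E}+\sqrt{F}$ as $2x-v-\overbar{v}+2$ and rationalizing by squaring twice. The added caution that $g$ may have extraneous roots (so the lemma only claims ``is a zero'') is a fair observation, matching how the paper later uses monotonicity of $g$ rather than root identification.
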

\begin{proof}
This is a direct computation from \eqref{eq6.1}.
\end{proof}

We will prove that $g(x)$ is increasing on $[(4n-5)/3, \infty)$, and $g(\rho_0)\geq0$ except for the case $(n, c, \overbar{c})=(3k+2, 2k+1, 2k+1)$ under the assumption $v= n-\overbar{c}$ and $\overbar{v}=n-c-1$.

\begin{lem}\label{lem6.3} If $c+\overbar{c}\geq(4n-5)/3$, $v= n-\overbar{c}$ and $\overbar{v}=n-c-1$, then
\begin{align}
&E+F \nonumber \\=&\frac{3}{2}(c+\overbar{c})^2-(2n-1)(c+\overbar{c})+2n^2-2n+1 -\frac{1}{2}(c-\overbar{c})^2-(c-\overbar{c}). \label{eq6.4}
\end{align}
Moreover if  $x\geq ({4n}-{5})/{3}$,  then
$(2x-v-\overbar{v}+2)^2-(E+F)>0.$
\end{lem}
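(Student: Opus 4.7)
The identity for $E+F$ is a mechanical substitution. Replacing $v$ by $n-\overbar{c}$ and $\overbar{v}$ by $n-c-1$ in (\ref{eq6.2}) turns the two leading squares into $(2c+\overbar{c}-n-1)^2$ and $(2\overbar{c}+c-n)^2$. Since $\overbar{v}=n-c-1$, Lemma~\ref{lem5.1} applies with equality, and since $v+\overbar{c}=n$ the term $-v-\overbar{c}$ collapses to $-n$; thus
$$s+\overbar{s}=-\tfrac{3}{4}(c+\overbar{c})^2+(n+1)(c+\overbar{c})-\tfrac{1}{4}(c-\overbar{c})^2-n.$$
Writing $a=c+\overbar{c}$ and $b=c-\overbar{c}$, expanding both squares, and adding $4(s+\overbar{s})$ recovers the stated formula after a routine collection of terms.

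For the ``moreover'' assertion, my plan is to reduce to the boundary value $x=(4n-5)/3$. Define $y(x):=2x-v-\overbar{v}+2=2x+a-2n+3$; this is strictly increasing in $x$ and positive on $x\geq (4n-5)/3$ (using $a\geq (4n-5)/3$ and $n\geq 3$), so $y(x)^2\geq y_0^2$ where $y_0:=y((4n-5)/3)=a+(2n-1)/3$. It therefore suffices to show $y_0^2-(E+F)>0$. Subtracting the formula from the first part and using the identity $\tfrac{1}{2}b^2+b=\tfrac{1}{2}(b+1)^2-\tfrac{1}{2}$ produces
$$y_0^2-(E+F)=-\tfrac{1}{2}a^2+\tfrac{5(2n-1)}{3}\,a-\tfrac{14n^2-14n+8}{9}+\tfrac{1}{2}(b+1)^2-\tfrac{1}{2}.$$

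As a function of $a$, the right side is a concave parabola with vertex at $a^{*}=5(2n-1)/3$; by Lemma~\ref{lem5.1} the feasible range $a\in[(4n-5)/3,(4n+1)/3)$ lies strictly to the left of $a^{*}$ for every $n\geq 2$, so the expression is strictly increasing in $a$ on this range and its minimum is attained at the left endpoint. At $a=(4n-5)/3$ one has $y_0=2(n-1)$, the $a$-dependent terms simplify to $2n^{2}-4n+\tfrac{1}{2}$, and one is left with the clean identity
$$y_0^2-(E+F)\big|_{a=(4n-5)/3}=2n(n-2)+\tfrac{1}{2}(b+1)^2,$$
which is strictly positive for every real $b$ as soon as $n\geq 3$. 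The main obstacle is merely keeping the algebra straight while deriving the $E+F$ formula; once that identity is in hand, the reduction to the boundary $x=(4n-5)/3$, the monotonicity in $a$, and the perfect-square collapse $\tfrac{1}{2}+b+\tfrac{1}{2}b^2=\tfrac{1}{2}(b+1)^2$ dispatch the remaining work cleanly.
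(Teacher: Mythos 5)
Your proposal is correct and follows essentially the same route as the paper: derive \eqref{eq6.4} by substituting $v=n-\overbar{c}$, $\overbar{v}=n-c-1$ into \eqref{eq6.2} together with the equality case of \eqref{eq5.1}, then bound $(2x-v-\overbar{v}+2)^2-(E+F)$ from below by evaluating at $x=(4n-5)/3$, using that the resulting concave quadratic in $a=c+\overbar{c}$ is increasing on the feasible range and that $\tfrac12(c-\overbar{c})^2+(c-\overbar{c})\geq-\tfrac12$, arriving at the same bound $2n(n-2)>0$. The only (harmless) difference is that you justify the monotonicity in $a$ explicitly via the upper bound \eqref{eq5.2}, a step the paper performs silently.
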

\begin{proof}
By (\ref{eq6.2}), the equality in (\ref{eq5.1}) and the assumptions, we have
\begin{align*}
E+F=& (2c-(n-\overbar{c})-1)^2+4s+(2\overbar{c}-(n-1-c)-1)^2+4\overbar{s}\\
   =&(2c+\overbar{c}-n-1)^2+(c+2\overbar{c}-n)^2\\
    &\quad +4\left(-\frac{3}{4}(c+\overbar{c})^2+(n+1)(c+\overbar{c})-\frac{1}{4}(c-\overbar{c})^2-v-\overbar{c}\right).
   \end{align*}
Simplifying the above equation, we immediately have (\ref{eq6.4}).
If  $x\geq ({4n}-{5})/{3}$, then by (\ref{eq6.4}), $n\geq 3$ and $(c-\overbar{c})^2+2(c-\overbar{c})\geq -1$,
\begin{align*}
&(2x-v-\overbar{v}+2)^2-(E+F)\\
\geq&  \left(2\left(\frac{4n-5}{3}\right)-(n-\overbar{c})-(n-1-c)+2\right)^2\\
&-\left(\frac{3}{2}(c+\overbar{c})^2-(2n-1)(c+\overbar{c})+2n^2-2n+1 -\frac{1}{2}(c-\overbar{c})^2-(c-\overbar{c})\right)\\
=&-\frac{1}{2}(c+\overbar{c})^2+ \frac{10n-5}{3}(c+\overbar{c})+\frac{-14n^2+14n-8}{9}+\frac{1}{2}(c-\overbar{c})^2+(c-\overbar{c})\\
\geq& -\frac{1}{2}\left(\frac{4n-5}{3}\right)^2+ \frac{10n-5}{3}\left(\frac{4n-5}{3}\right)+\frac{-14n^2+14n-8}{9}+\frac{1}{2}(c-\overbar{c})^2+(c-\overbar{c})\\
\geq&\frac{36n^2-72n+9}{18}-\frac{1}{2}=2n(n-2)>0.
\end{align*}
\end{proof}

\begin{prop}\label{prop6.4}  If $A\in \mathscr{S}^*(n)$, $c+\overbar c\geq (4n-5)/3$,  $v=n-\overbar{c}$ and $\overbar{v}=n-c-1$,  then the function $g(x)$  is increasing on the interval $[(4n-5)/3, \infty)$.
\end{prop}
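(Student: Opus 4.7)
The plan is to introduce the auxiliary variable $y := 2x - v - \overbar{v} + 2$ and rewrite $g$ as a polynomial in $y$; after that, a single differentiation together with Lemma~\ref{lem6.3} should close the argument.

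First I would substitute $y$ into the formula from Lemma~\ref{lem6.2} to obtain
$$ g(x) = y^4 - 2(E+F)y^2 + (E-F)^2. $$
Since $dy/dx = 2$, differentiating yields
$$ g'(x) = 8\,y\,\bigl(y^2 - (E+F)\bigr). $$
Thus it will suffice to show that both factors $y$ and $y^2 - (E+F)$ are strictly positive on $[(4n-5)/3,\infty)$.

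The factor $y^2 - (E+F)$ is handled immediately by the ``moreover'' clause of Lemma~\ref{lem6.3}. For the factor $y$, I would use the hypotheses $v = n - \overbar{c}$ and $\overbar{v} = n - c - 1$ to get $v + \overbar{v} = 2n - c - \overbar{c} - 1$, so that at the left endpoint $x = (4n-5)/3$,
$$ y = \frac{8n-4}{3} - (2n - c - \overbar{c} - 1) = \frac{2n-1}{3} + c + \overbar{c}, $$
which is positive since $n \geq 3$ and $c + \overbar{c} \geq (4n-5)/3 > 0$. Because $y$ is linear and strictly increasing in $x$, this positivity persists throughout the interval. Combining the two sign facts gives $g'(x) > 0$, proving the proposition.

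The main potential obstacle, namely the inequality $y^2 > E + F$, has already been absorbed into Lemma~\ref{lem6.3}, where the explicit identity (\ref{eq6.4}) for $E+F$ together with the lower bound $c + \overbar{c} \geq (4n-5)/3$ does the heavy lifting. The only remaining work in the proof of the proposition itself is the routine substitution and differentiation sketched above, so I do not anticipate any further difficulty.
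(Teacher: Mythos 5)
Your proposal is correct and follows essentially the same route as the paper: differentiate the quartic from Lemma~\ref{lem6.2} to get $g'(x)=8\,(2x-v-\overbar{v}+2)\bigl((2x-v-\overbar{v}+2)^2-(E+F)\bigr)$, then show the linear factor is positive using $v+\overbar{v}=2n-c-\overbar{c}-1$ and invoke the ``moreover'' part of Lemma~\ref{lem6.3} for the quadratic factor. The only cosmetic difference is that the paper bounds $2x-v-\overbar{v}+2\geq 2n-2$ via $c+\overbar{c}\geq(4n-5)/3$, while you evaluate it at the left endpoint as $\tfrac{2n-1}{3}+c+\overbar{c}$; both give strict positivity.
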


\begin{proof} The derivative of the $g(x)$ in Lemma~\ref{lem6.2} is
$$g'(x)=8(2x-v-\overbar{v}+2)((2x-v-\overbar{v}+2)^2-E-F).$$
It suffices to show  $g'(x)>0$ for $x\in [{4n-5}/{3}, \infty).$
This follows since
$$2x-v-\overbar{v}+2\geq 2\left( \frac{4n-5}{3}\right)-(2n-c-\overbar{c}-1)+2 \geq 2n-2> 0,$$
and since $(2x-v_1-v_2+2)^2-E-F>0$ by Lemma~\ref{lem6.3}.
\end{proof}

\begin{prop}\label{prop6.5} If $A\in \mathscr{S}^*(n)$, $\rho(A)+\rho(\overbar{A})\geq \rho_0$,  $v= n-\overbar{c}$, $\overbar{v}=n-c-1$, and $(n,c+\overbar{c})\ne(3k+2,4k+1)$, $(n, c, \overbar{c})\not=(3k+2, 2k+1, 2k+1)$, then $g(\rho_0)\geq 0$.
Moreover $g(\rho_0)= 0$ if and only if $(n,c,\overbar{c})=(3k,2k-1,2k)$ or $(3k+1,2k,2k)$.
\end{prop}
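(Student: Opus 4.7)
I would start by recasting $g(\rho_0)\geq 0$ as an inequality for $\sqrt{E}+\sqrt{F}$. With $L := 2\rho_0 - v - \overbar{v} + 2$, Lemma~\ref{lem6.2} gives
$$g(\rho_0) \;=\; L^4 - 2L^2(E+F) + (E-F)^2 \;=\; (L^2-(E+F))^2 - 4EF,$$
and Lemma~\ref{lem6.3} ensures $L^2 > E+F > 0$. So $g(\rho_0)\geq 0$ is equivalent to $L^2-(E+F)\geq 2\sqrt{EF}$, i.e.\ to $L\geq \sqrt{E}+\sqrt{F}$, which by (\ref{eq6.1}) is exactly $\rho_0\geq \phi(A)+\phi(\overbar{A})$. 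The proposition thus reduces to proving $\sqrt{E}+\sqrt{F}\leq L$ and identifying the equality cases.

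The hypotheses $v=n-\overbar{c}$ and $\overbar{v}=n-c-1$ force (\ref{eq5.4}) to be an equality, so $s+\overbar{s}$ is determined by $(n,c,\overbar{c})$ and, by Lemma~\ref{lem6.3}, so is $E+F$. For fixed $(n,c,\overbar{c})$ the only free parameter is $s-\overbar{s}$, which governs $E-F$. Lemma~\ref{lem2.8} supplies the coarse bound $\sqrt{E}+\sqrt{F}\leq \sqrt{2(E+F)}$, with equality iff $E=F$. Substituting $\rho_0$ from Proposition~\ref{prop4.3} into $L^2-2(E+F)$, the comparison in each of the five residue cases $(n,c+\overbar{c})\neq (3k+2,4k+1)$ reduces, after isolating the square root $\sqrt{9k^2\pm 2k+1}$ present in $\rho_0$, to verifying an inequality of the form $(18k^2+\alpha k+\beta)^2 \gtrless 4(3k+\gamma)^2(9k^2+\delta k+1)$ whose two sides differ by a clean square such as $(2k-1)^2$ or $(2k+1)^2$. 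This computation isolates a short list of ``hard'' values of $c-\overbar{c}$ (all lying in $\{-2,-1,0\}$) for which $L^2<2(E+F)$; for every other $(c,\overbar{c})$ in the five cases one has $L^2\geq 2(E+F)$, so $\sqrt{E}+\sqrt{F}\leq \sqrt{2(E+F)}\leq L$ with strict inequality and $g(\rho_0)>0$.

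For each of the finitely many hard subcases I would apply Lemma~\ref{lem3.1}(ii): the bounds $1\leq s\leq 2c-v$ and $1\leq \overbar{s}\leq 2\overbar{c}-\overbar{v}$, together with the prescribed value of $s+\overbar{s}$, force $(s,\overbar{s})$ onto a unique extremal integer point of the feasible segment. Computing $\sqrt{E}+\sqrt{F}$ at that point and simplifying via the characteristic identity $\alpha^2=(k-1)\alpha+k(n-k)$ for $\alpha=\rho(K_k\vee N_{n-k})$ (which yields $\sqrt{9k^2\pm 2k+1}=2\alpha-(k-1)$ for $n=3k$ or $3k+1$), one checks that $L=\sqrt{E}+\sqrt{F}$ holds exactly in the subcases $(n,c,\overbar{c})=(3k,2k-1,2k)$ and $(3k+1,2k,2k)$, where the extremal $(s,\overbar{s})$ coincides with the modified-extremal-graph configuration produced by Lemma~\ref{lem3.4}; in every other hard subcase the analogous calculation gives a strict gap $L>\sqrt{E}+\sqrt{F}$. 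The main obstacle is precisely this tight verification in the two equality cases: because $L=\sqrt{E}+\sqrt{F}$ is saturated there, any coarser estimate will fail, and one must recognize and invoke the defining quadratic of $\rho(K_{\lfloor n/3\rfloor}\vee N_{n-\lfloor n/3\rfloor})$ to close the identity exactly.
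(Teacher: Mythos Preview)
Your proposal is correct and follows essentially the same route as the paper: both dispose of the cases $c-\overbar{c}\notin\{-2,-1,0\}$ via the coarse inequality $L^2-2(E+F)\geq 0$ (the computation summarized in (\ref{eq6.9})--(\ref{eq6.11})), and both treat the remaining hard subcases by combining the explicit form of $\rho_0$ with the constraint on $s+\overbar{s}$ from the equality in (\ref{eq5.1}) and the bound $s\leq 2c-v$ of Lemma~\ref{lem3.1}(ii). The only tactical difference is that in two of the strict-inequality subcases (Cases~2 and~4) the paper bounds $(E-F)^2$ from below and works directly with $g(\rho_0)=L^2(L^2-2(E+F))+(E-F)^2$, whereas you propose to maximize $\sqrt{E}+\sqrt{F}$ via Lemma~\ref{lem2.8} uniformly; both approaches succeed, and your explicit equivalence $g(\rho_0)\geq 0 \Longleftrightarrow L\geq\sqrt{E}+\sqrt{F}$ is a clean way to unify the handling of Cases~1, 3, 5 with the rest.
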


\begin{proof} Considering the three cases $n=3k$, $3k+1$ or $3k+2$ in  (\ref{eq4.3}) separately, we have
\begin{align}\rho_0=&\left\{
          \begin{array}{ll}
            \frac{1}{2}(5k-3+\sqrt{9k^2-2k+1}), & \hbox{if $n=3k$;} \\
            \frac{1}{2}(5k-1+\sqrt{9k^2+2k+1}), & \hbox{if $n=3k+1$;} \\
            4k+1, & \hbox{if $n=3k+2$}
          \end{array}
        \right. \label{eq6.5} \\
=& \frac{4n-5}{3}+u_n \nonumber \\
=&\left\{
          \begin{array}{ll}
            \frac{1}{3}(12k-5)+u_n, & \hbox{if $n=3k$;} \\
            \frac{1}{3}(12k-1)+u_n, & \hbox{if $n=3k+1$;} \\
            4k+1, & \hbox{if $n=3k+2$,}
          \end{array}
        \right. \label{eq6.6}
\end{align}
where
\begin{align*}
u_n:=&\rho_0-\frac{4n-5}{3}\\
=&\left\{
          \begin{array}{ll}
            \frac{1}{2}(5k-3+\sqrt{9k^2-2k+1})-\frac{12k-5}{3}, & \hbox{if $n=3k$;} \\
            \frac{1}{2}(5k-1+\sqrt{9k^2+2k+1})-\frac{12k-1}{3}, & \hbox{if $n=3k+1$;} \\
            4k+1-(4k+1), & \hbox{if $n=3k+2$.}
          \end{array}
        \right.\\
    =&\displaystyle\left\{
        \begin{array}{ll}
          \frac{4}{3\left(9k-1+\sqrt{(9k-1)^2+8}\right)}, & \hbox{if $n=3k$;} \\
          \frac{4}{3\left(9k+1+\sqrt{(9k+1)^2+8} \right)}, & \hbox{if $n=3k+1$;} \\
          0, & \hbox{if $n=3k+2$,}
        \end{array}
      \right.
\end{align*}
Hence
\begin{equation}\label{eq6.7}
\left\{
  \begin{array}{ll}
    \frac{4}{54k-3}<u_n<\frac{2}{27k-3}, & \hbox{if $n=3k$;} \\
    \frac{4}{54k+9}<u_n<\frac{2}{27k+3}, & \hbox{if $n=3k+1$;}\\
    u_n=0, & \hbox{if $n=3k+2$.}
\end{array}
\right.
\end{equation}
From (\ref{eq6.5}),  $\rho_0$ is a zero of
\begin{equation}\label{eq6.8}
f(x)=\left\{
       \begin{array}{ll}
         x^2-(5k-3)x+4k^2-7k+2, & \hbox{if $n=3k$;} \\
         x^2-(5k-1)x+4k^2-3k, & \hbox{if $n=3k+1$;} \\
         x^2-(5k+1)x+4k^2+k, & \hbox{if $n=3k+2$.}
       \end{array}
     \right.
\end{equation}
We will use (\ref{eq6.5})-(\ref{eq6.8}) to compute $g(\rho_0)$.
By using the assumptions, $f(\rho_0)=0$ in (\ref{eq6.8}) and the $\rho_0$ in (\ref{eq6.6}),  we have
\begin{align}
&(2\rho_0-v-\overbar{v}+2)^2=(2\rho_0-(n-\overbar{c})-(n-c-1)+2)^2-4f(\rho_0) \nonumber\\
=&4(\rho_0^2-f(\rho_0))+4(-2n+c+\overbar{c}+3)\rho_0+(-2n+c+\overbar{c}+3)^2\nonumber\\
=&\left\{
   \begin{array}{ll}
  36k^2-16k+\frac{8}{3}+(12k-4)u_n, & \hbox{if $(n, c+\overbar{c})=(3k, 4k-1)$;} \\
  36k^2-4k+1+12ku_n, & \hbox{if $(n, c+\overbar{c})=(3k, 4k)$;} \\
  36k^2+4k+1+12ku_n, & \hbox{if  $(n, c+\overbar{c})=(3k+1, 4k)$ ;} \\
  36k^2+16k+\frac{8}{3}+(12k+4)u_n    , & \hbox{if $(n, c+\overbar{c})=(3k+1, 4k+1)$;} \\
  36k^2+36k+9, & \hbox{if $(n, c+\overbar{c})=(3k+2, 4k+2)$.}
   \end{array}
 \right. \label{eq6.9}
\end{align}
By (\ref{eq6.4}), we have
\begin{equation}
E+F =\left\{
   \begin{array}{ll}
   18k^2-8k+\frac{3}{2}-\frac{T}{2}, & \hbox{if $(n, c+\overbar{c})=(3k, 4k-1)$;} \smallskip\\
   18k^2-2k+1-\frac{T}{2} , & \hbox{if $(n, c+\overbar{c})=(3k, 4k)$;} \smallskip\\
18k^2+2k+1-\frac{T}{2}, & \hbox{if  $(n, c+\overbar{c})=(3k+1, 4k)$ ;} \smallskip\\
18k^2+8k+\frac{3}{2}-\frac{T}{2}    , & \hbox{if $(n, c+\overbar{c})=(3k+1, 4k+1)$;} \smallskip\\
18k^2+18k+5-\frac{T}{2}  , & \hbox{if $(n, c+\overbar{c})=(3k+2, 4k+2)$,}
   \end{array}
 \right. \label{eq6.10}
\end{equation}
where $T=(c-\overbar{c})^2+2(c-\overbar{c}).$
By (\ref{eq6.9})-(\ref{eq6.10}), we have
\begin{align}
&(2\rho_0-v-\overbar{v}+2)^2-2(E+F)\nonumber \\
= &\left\{
   \begin{array}{ll}
   -\frac{1}{3}+\left( 12k-4\right)u_n+T, & \hbox{if $(n, c+\overbar{c})=(3k, 4k-1)$;} \\
    -1+12ku_n+T , & \hbox{if $(n, c+\overbar{c})=(3k, 4k)$;} \\
 -1+12ku_n+T , & \hbox{if  $(n, c+\overbar{c})=(3k+1, 4k)$ ;} \\
 -\frac{1}{3}+(12k+4)u_n+T     , & \hbox{if $(n, c+\overbar{c})=(3k+1, 4k+1)$;} \\
 -1+T   , & \hbox{if $(n, c+\overbar{c})=(3k+2, 4k+2)$,}
   \end{array}
 \right. \label{eq6.11}\\
\geq & -1+T.\nonumber
\end{align}
 If $c-\overbar{c}\not\in \{0, -1, -2\}$ then $T=(c-\overbar{c})^2+2(c-\overbar{c})\geq 3$ and $g(\rho_0)>0$ with referring to the expression of $g(x)$ in Lemma~\ref{lem6.2}. We investigate the remaining cases according to list of the above five cases of $(n, c+\overbar{c})$ under the assumption that $c-\overbar{c}\in \{0, -1, -2\}$ and $(n, c, \overbar{c})\not=(3k+2, 2k+1, 2k+1)$. In the following case by case discussions, we use  the inequality $0<s\leq 2c-v$ by Lemma \ref{lem3.1}(ii),
the equation
$$s+\overbar{s}=-\frac{3}{4}(c+\overbar{c})^2+(n+1)(c+\overbar{c})-\frac{1}{4}(c-\overbar{c})^2-n$$
from the equality case of (\ref{eq5.1}) and its value in \eqref{eq6.3}, and the formula of $E$ and $F$ in (\ref{eq6.2}) many times without further notice.
\medskip

\noindent  {\bf Case 1.} $n=3k$ and $(c,\overbar{c}) = (2k-1,2k)$: Then $(v,\overbar{v})=(k,k)$, $s+\overbar{s}=4k-2$, $E=(3k-3)^2+4s$,  $F=(3k-1)^2+4\overbar{s}$, and $s\leq 3k-2$.  Hence $\overbar{s}\leq k$, $s-\overbar{s}\leq (3k-2)-k= 2k-2$ and $E\leq (3k-1)^2<F$. By Lemma~\ref{lem2.8}, $\sqrt{E}+\sqrt{F}$ takes maximum at $(s, \overbar{s})=(3k-2, k).$ By (\ref{eq6.1}) and  (\ref{eq6.5}),
\begin{align*}
\rho_0\leq &\rho(A)+\rho(\overbar{A})\leq \phi(A)+\phi(\overbar{A})=\frac{1}{2}\left(v+\overbar{v}-2+\sqrt{E}+\sqrt{F}\right)\\
\leq &\frac{1}{2}\left(2k-2+\sqrt{(3k-3)^2+4(3k-2)}+\sqrt{(3k-1)^2+4k}\right)=\rho_0.
\end{align*}
Hence $\rho_0=\rho(A)+\rho(\overbar{A})=\phi(A)+\phi(\overbar{A})$ and $g(\rho_0)=0$.
\medskip

\noindent{\bf Case 2.} $n=3k$ and $(c,\overbar{c})\in \{(2k,2k),(2k-1,2k+1)\}$:
First, suppose $(c,\overbar{c})=(2k,2k)$. Then $(v,\overbar{v})=(k,k-1)$, $s+\overbar{s}=k$,
$E=(3k-1)^2+4s$ and  $F=(3k)^2+4\overbar{s}$. Hence $s-\overbar{s}\leq (k-1)-1=k-2$, and
$$E-F=(3k-1)^2-(3k)^2+4(s-\overbar{s})\leq -2k-7.$$
Next suppose $(c,\overbar{c})=(2k-1,2k+1)$. Then $s+\overbar{s}=k-1$, $(v,\overbar{v})=(k-1,k)$,
$E=(3k-2)^2+4s$, $F=(3k+1)^2+4\overbar{s}$. Hence $s-\overbar{s}\leq k-3$, and
$$E-F=(3k-2)^2-(3k+1)^2+4(s-\overbar{s})\leq -14k-9.$$
Thus  $(E-F)^2\geq (2k+7)^2$ for $(c,\overbar{c})\in \{(2k,2k),(2k-1,2k+1)\}$.
By (\ref{eq6.9}), (\ref{eq6.11}), and (\ref{eq6.7}), we have $(2\rho_0-v-\overbar{v}+2)^2\leq 36k^2-4k+2$ and
\begin{align*}(2\rho_0-v-\overbar{v}+2)^2-2(E+F)=&-1+12u_n+T
>-1+48/(54k-3)+0\\>&-1+8/9= -1/9.\end{align*}
Hence
\begin{align*}
g(\rho_0)=&(2\rho_0-v-\overbar{v}+2)^2((2\rho_0-v-\overbar{v}+2)^2-2(E+F))+(E-F)^2 \\
>&(36k^2-4k+2)\left(-\frac{1}{9}\right)+(2k+7)^2>0.
\end{align*}

\noindent{\bf Case 3.} $n=3k+1$ and $(c,\overbar{c})\in  \{(2k,2k),(2k-1,2k+1)\}$:
First suppose $(c,\overbar{c})=(2k,2k)$. Then $(v,\overbar{v})=(k+1,k)$, $s+\overbar{s}=5k-1$, $E=(3k-2)^2+4s$,  $F=(3k-1)^2+4\overbar{s}$, and $s\leq 3k-1$. Then $\overbar{s}\geq 2k$ and $E\leq (3k)^2<(3k-1)^2+4\cdot 2k\leq F$. Thus by Lemma~\ref{lem2.8}, $\sqrt{E}+\sqrt{F}$ takes maximum at $(s,\overbar{s})=(3k-1,2k)$.
Therefore by (\ref{eq6.1}) and (\ref{eq6.5}),
\begin{align*}
\rho_0\leq&\rho(A)+\rho\left(\overbar{A}\right)\leq \phi(A)+\phi\left(\overbar{A}\right)=\frac{1}{2}\left(v+\overbar{v}-2+\sqrt{E}+\sqrt{F}\right) \\
      \leq& \frac{1}{2}\left(2k-1+\sqrt{(3k-2)^2+4(3k-1)}+\sqrt{(3k-1)^2+4\cdot 2k}\right) \\
         =& \frac{1}{2}\left(5k-1+\sqrt{9k^2+2k+1}\right)=\rho_0.
\end{align*}
As in the case 1, we have $g(\rho_0)=0$.
\medskip

Next suppose $(c,\overbar{c})=(2k-1,2k+1)$. Then $(v, \overbar{v})=(k,k+1)$, $s+\overbar{s}=5k-2$, $E=(3k-3)^2+4s$,  $F=(3k)^2+4\overbar{s}$, and  $s\leq 3k-2$.  Hence  $E\leq(3k-1)^2<F$. Then by Lemma~\ref{lem2.8}, $\sqrt{E}+\sqrt{F}$ takes maximum at $(s,\overbar{s})=(3k-2,2k)$.
Therefore  by (\ref{eq6.1}) and (\ref{eq6.5}),
\begin{align*}
 \rho_0  \leq  &\rho(G)+\rho\left(\overbar{G}\right)\leq \phi(A)+\phi\left(\overbar{A}\right)=\frac{1}{2}\left(v+\overbar{v}-2+\sqrt{E}+\sqrt{F}\right) \\
\leq& \frac{1}{2}\left(2k-1+\sqrt{(3k-3)^2+4(3k-2)}+\sqrt{(3k-1)^2+4\cdot 2k}\right) \\
=& \frac{1}{2}\left(5k-2+\sqrt{9k^2+2k+1}\right)<\rho_0,
\end{align*}
a contradiction.
\medskip

\noindent{\bf Case 4.} $n=3k+1$ and $(c, \overbar{c})=(2k, 2k+1)$:
 Then $(v,\overbar{v})=(k,k)$, $s+\overbar{s}=2k$, $E=(3k-1)^2+4s$, and  $F=(3k+1)^2+4\overbar{s}$. Hence $s-\overbar{s}\leq 2k-2$, and  $$E-F=(3k-1)^2-(3k+1)^2+4(s-\overbar{s})\leq -12k+4(2k-2)=-4k-8.$$
By (\ref{eq6.9}), (\ref{eq6.11}), and (\ref{eq6.7}), we have $(2\rho_0-v-\overbar{v}+2)^2\leq 36k^2+16k+4$ and
\begin{align*}
(2\rho_0-v-\overbar{v}+2)^2-2(E+F)=&-1/3+(12k+4)u_n+T\\
>&-1/3+(48k+16)/(54k+9)-1\\
>&-1/3+8/9-1= -4/9.
\end{align*}
Hence
\begin{align*}
g(\rho_0)
=&(2\rho_0-v-\overbar{v}+2)^2((2\rho_0-v-\overbar{v}+2)^2-2(E+F))+(E-F)^2 \\
>&(36k^2+16k+4)\left(-\frac{4}{9}\right)+(4k+8)^2= 64\left(\frac{8}{9}k+\frac{35}{36}\right)>0.
\end{align*}

\noindent{\bf Case 5.} $n=3k+2$ and $(c, \overbar{c})=(2k, 2k+2)$: Then $(v,\overbar{v})=(k,k+1)$, $s+\overbar{s}=3k$, $E=(3k-1)^2+4s$ and $F=(3k+2)^2+4\overbar{s}$. Hence $E<(3k+1)^2<F$. By Lemma~\ref{lem2.8}, we have $\sqrt{E}+\sqrt{F}<\sqrt{(3k-1)^2+4\cdot 3k}+\sqrt{(3k+2)^2+4\cdot 0}=6k+3$ . Therefore,
\begin{align*}
\rho_0\leq &\rho(A)+\rho\left(\overbar{A}\right) \leq \phi(A)+\phi\left(\overbar{A}\right)=\frac{1}{2}\left(v+\overbar{v}-2+\sqrt{E}+\sqrt{F}\right) \\
<& \frac{1}{2}(2k-1+6k+3) = 4k+1=\rho_0,
\end{align*}
a contradiction.
\end{proof}

\section{Proof of Conjecture \ref{conj}}\label{s7}

Let $G$ be a graph of order $n\geq 3$, $A=A(G)$ and $\rho(A)+\rho(\overbar{A})\geq \rho_0$, where $\rho_0$ is  defined before Proposition~\ref{prop4.3}. To prove Conjecture \ref{conj}, we shall prove that $G$ or its complement graph $\overbar{G}$ is the complete split graph
$K_{\lfloor\frac{n}{3}\rfloor}\vee N_{\lceil\frac{2n}{3}\rceil}$, and if $n\equiv 2 \ (\text{mod }  3)$,  $K_{\lceil\frac{n}{3}\rceil}\vee N_{\lfloor\frac{2n}{3}\rfloor}$ is also possible.
By Lemma \ref{lem2.1},  $A\in\mathscr{S}^*_s(n)$.
Let $c=c(A)$, $v=v(A)$, $s=s(A)$, $\overbar{c}=c(\overbar{A})$, $\overbar{v}=v(\overbar{A})$ and $\overbar{s}=s(\overbar{A})$ denote the parameters of $A$. By interchanging $A$ and $\overbar{A}$ if necessary, we might assume $a_{c+1, n-\overbar{c}}=1$, or equivalently $v\geq n-\overbar{c}$ or $\overbar{v}\leq n-c-1.$
By  Corollary~\ref{cor5.3}, we have $c+\overbar{c}=\left\lfloor \frac{4n}{3}\right\rfloor$ or $c+\overbar{c}=\left\lfloor \frac{4n}{3}\right\rfloor-1$.
\medskip

Suppose $(n, c+\overbar{c})=(3k+2, 4k+1)$. Then $A=A(K_{2k+2}+ N_{k})$ or $A=A(K_{2k+1}+ N_{k+1})$ by Proposition~\ref{prop5.4}.
Since the complement graph of $K_{2k+2}+ N_{k}$ is the complete split graph $K_k\vee N_{2k+1}$ with $k=\left\lfloor \frac{n}{3}\right\rfloor$ and the complement graph of  $K_{2k+1}+ N_{k+1}$ is the complete split graph  $K_{k+1}\vee N_{2k+1}$
with $k+1=\left\lceil \frac{n}{3}\right\rceil$,  so the proof is finished in this case.
\medskip

Suppose $(n, c+\overbar{c})\not=(3k+2, 4k+1)$. We need to consider matrices which are not symmetric. First, apply Lemma~\ref{lem3.3} to find $A_1\in \mathscr{S}^*(n)$ with parameters $c_1=c(A_1)=c$, $v_1=v(A_1)=v$, $s_1=s(A_1)=s$, $\overbar{c}_1=\overbar{c}(A_1)=\overbar{c}$, $\overbar{v}_1=\overbar{v}(A_1)=\min\{2\overbar{c}_1-\overbar{s}_1,n-c_1\}\geq \overbar{v}$, $\overbar{s}_1=\overbar{s}(A_1)\geq\overbar{s}$, and $\phi(A)=\phi(A_1)$, $\phi(\overbar{A})\leq \phi(\overbar{A}_1)$.
If $2\overbar{c}_1-\overbar{s}_1< n-c_1-1$, then $\overbar{v}_1=2\overbar{c}_1-\overbar{s}_1$. By Proposition \ref{prop6.1} with its $A$ being $A_1$,
$$\rho_0\leq \rho(A)+\rho(\overbar{A})\leq\phi(A)+\phi(\overbar{A})\leq\phi(A_1)+\phi(\overbar{A}_1)<\rho_0,$$
a contradiction.
Now assume $2\overbar{c}_1-\overbar{s}_1\geq n-c_1-1$. Then $\overbar{v}_1=n-c_1-1$.
Applying Lemma~\ref{lem3.4} with its $A$ being $A_1$ here and its $v'$ being $n-\overbar{c}_1$ here to find $A_2\in \mathscr{S}^*(n)$ with parameters $c_2=c(A_2)$, $v_2=v(A_2)$, $s_2=s(A_2)$, $\overbar{c}_2=\overbar{c}(A_2)$, $\overbar{v}_2=\overbar{v}(A_2)$, $\overbar{s}_2=\overbar{s}(A_2)$ satisfying $c_2=c_1$, $v_2=n-\overbar{c}_2$, $s_2+v_2=s_1+v_1$, $\overbar{c}_2=\overbar{c_1}$, $\overbar{v}_2=\overbar{v}_1=n-c_2-1$, $\overbar{s}_2=\overbar{s}_1$ and $\phi(A_1)\leq \phi(A_2)$, $\phi(\overbar{A}_1)=\phi(\overbar{A}_2)$.
Putting the above inequalities together,
\begin{equation}\label{eq7.1}\rho_0\leq \rho(A)+\rho(\overbar{A})\leq \phi(A)+\phi(\overbar{A})\leq \phi(A_1)+\phi(\overbar{A_1})\leq \phi(A_2)+\phi(\overbar{A}_2).\end{equation}
Notice that the parameters $c_2, v_2, s_2, \overbar{c}_2, \overbar{v}_2, \overbar{s}_2$
of $A_2$ satisfy the assumptions $c_2=c$, $\overbar{c}_2=\overbar{c}$, $v_2=n-\overbar{c}_2$ and $\overbar{v}_2=n-c_2-1$.

\medskip

Further suppose $(n, c, \overbar{c})\not=(3k+2, 2k+1, 2k+1)$. Let $g(x)$ be the polynomial
with $g\left(\phi(A_2)+\phi(\overbar{A}_2)\right)=0$, defined in Lemma~\ref{lem6.2}. Then $g(\rho_0)\geq 0$ by Proposition~\ref{prop6.5}.
Since $g(x)$ is increasing on $[(4n-5)/3, \infty)$ by Proposition~\ref{prop6.4}, and $\rho_0\in [(4n-5)/3, \infty),$ we have
$\phi(A_2)+\phi(\overbar{A}_2)\leq \rho_0$, which implies
$\rho(A)+\rho\left(\overbar{A}\right)=\phi(A)+\phi(\overbar{A})=\phi(A_2)+\phi(\overbar{A}_2)=\rho_0$ by (\ref{eq7.1}),
and $g(\rho_0)=0$. Then $(n,c,\overbar{c})=(n,c_2,\overbar{c}_2)=(3k,2k-1,2k)$ or $(3k+1,2k,2k)$ by the second part of Proposition \ref{prop6.5}, which implies
$A=A(K_{c+1}+N_{n-c-1})=A(K_{n-\lfloor\frac{n}{3}\rfloor}+ N_{\lfloor\frac{n}{3}\rfloor})$ by Lemma~\ref{lem4.1}(i).
Since the complement graph of $K_{n-\lfloor\frac{n}{3}\rfloor}+ N_{\lfloor\frac{n}{3}\rfloor}$ is the complete split graph $K_{\lfloor\frac{n}{3}\rfloor}\vee N_{n-\lfloor\frac{n}{3}\rfloor}$, the proof is finished.
\medskip

Now consider the last case. Assume $(n, c, \overbar{c})=(3k+2, 2k+1, 2k+1)$. In this case $\rho_0=4k+1$ by (\ref{eq4.3}). We will show that this case
is impossible.
We turn back to consider the parameters of $A$ rather than $A_2$.
Recall that $v\geq n-\overbar{c}=k+1$ and $\overbar{v}\leq n-c-1=k$.
By (\ref{eq5.1}), $s+\overbar{s}\leq4k+2-v=2c-v$. Then $s<2c-v$ since $\overbar{s}>0$. Hence
$E=(2c-v-1)^2+4s< (2c-v+1)^2$ and $F=(2\overbar{c}-\overbar{v}-1)^2+4\overbar{s}$ by (\ref{eq6.2}).
If $v-\overbar{v}\geq 2$, then  $E<  (2c-v+1)^2\leq (2\overbar{c}-\overbar{v}-1)^2<F$. By (\ref{eq6.1}) and Lemma~\ref{lem2.8}, $\sqrt{E}+\sqrt{F}$ is less than the value taken at $(s,\overbar{s})=(2c-v,0)$. Then
\begin{align*}
&\rho(G)+\rho\left(\overbar{G}\right)\leq \phi(A)+\phi\left(\overbar{A}\right)=\frac{1}{2}(v+\overbar{v}-2+\sqrt{E}+\sqrt{F}) \\
<&\frac{1}{2}\left(v+\overbar{v}-2+\sqrt{(2c-v+1)^2}+\sqrt{(2\overbar{c}-\overbar{v}-1)^2}\right)  \\
=&c+\overbar{c}-1=4k+1=\rho_0,
\end{align*}
a contradiction.
\medskip

Next assume $v-\overbar{v}\leq 1$.  Then $v=k+1$, $\overbar{v}=k$ since $v\geq k+1$ and $\overbar{v}\leq k$, and $s+\overbar{s}=3k+1$ by the equality in \eqref{eq5.1} since $v=n-\overbar{c}$. Hence $s\leq 3k$.  We need another upper bound of  $\rho(A)+\rho\left(\overbar{A}\right)$ to deal with this case. Let
$$M_1=\begin{bmatrix}
2k& s \\
1& -k
\end{bmatrix}
\quad\text{and}\quad
M_2=\begin{bmatrix}
k-1 & k & 3k+1-s-a \\
k+1 & k & a \\
1 & 0 & 0
\end{bmatrix},
$$
where $$a=\sum\limits_{i=\overbar{v}+1}\limits^{\overbar{c}}(r_i\left(\overbar{A}\right)-\overbar{c}+1).$$
Note that $A$ is symmetric. From $a_{c+1, v+1}=0$, $a_{v, c+1}=1$ and the shape of $A$, we find $-k(k-1)=-(\overbar{c}-(\overbar{v}+1))(\overbar{c}-(\overbar{v}+1)-1)\leq a \leq 0.$
Note that $$\rho_r(M_1)=\phi(A)$$ by (\ref{eq3.2}). We will show that $\rho\left(\overbar{G}\right)\leq\rho_r(M_2)$. Let $M_3=(m_{ij})$ be a $(\overbar{c}+1)\times(\overbar{c}+1)$ matrix, where
$$m_{ij}=\left\{\begin{array}{ll}
1, &  \text{if $1\leq i,j\leq \overbar{c}$, $i\ne j$, or $i=\overbar{c}+1, 1\leq j\leq \overbar{v}$;}\\
r_i\left(\overbar{A}\right)-\overbar{c}+1, & \text{if $1\leq i\leq \overbar{c}$, $j=\overbar{c}+1$;}\\
0,& \text{otherwise},
\end{array}\right.$$
and let $\Pi_1=\{\{1\},\{2\},\ldots,\{\overbar{c}\},\{\overbar{c}+1,\ldots,n\}\}$. Then $\rho(\overbar{A})\leq \rho_r(M_3)$ by applying Theorem \ref{thm2.5} with $C=\overbar{A}$ and $\Pi=\Pi_1$. Let $\Pi_2=\{\{1,\ldots, \overbar{v}\},\{\overbar{v}+1,\ldots, \overbar{c}\},\{\overbar{c}+1\}\}$. Note that
$\sum_{i=1}^{\overbar{v}}(r_i(\overbar{A})-\overbar{c}+1)=\overbar{s}-1=3k+1-s-a$ and $\Pi_2$ is an equitable partition of $M_3^T$.
Then $M_2^T=\Pi_2(M_3^T)$ and $\rho_r(M_3)=\rho_r(\Pi_2(M_3^T))=\rho_r(M_2)$ by Lemma \ref{lem2.6}.
So $\rho_r(M_1)+\rho_r(M_2)$ is an upper bound of $\rho(A)+\rho\left(\overbar{A}\right)$.
Note that $\rho_r(M_1)+\rho_r(M_2)$ is equal to $\rho_r(M)$ by Lemma \ref{lem2.9}, where $M=M_1\otimes I_3+I_2\otimes M_2$. We will show that $\rho_r(M)<4k+1$, a contradiction to the assumption $\rho(A)+\rho\left(\overbar{A}\right)\geq \rho_0$ in the beginning. Note that
$$M=\begin{bmatrix}
3k-1 & k & 3k+1-s-a & s & 0 & 0 \\
k+1 & 3k & a & 0 & s & 0 \\
1 & 0 & 2k & 0 & 0 & s \\
1 & 0 & 0 & -1 & k & 3k+1-s-a \\
0 & 1 & 0 & k+1 & 0 & a \\
0 & 0 & 1 & 1 & 0 & -k
\end{bmatrix}.$$
Let $\det(xI_6-M)$ be the characteristic polynomial of $M$. Then the sum of the terms divisible by $a$ in $\det((4k+1)I_6-M)$ is
$$\begin{array}{ll}
&(3k+1-s)a^2+(78k^3-26k^2s+65k^2-ks-4s^2+19k+2s+2)a \\
=& \displaystyle \left(a+\frac{26k^2+13k+4s+2}{2}\right)^2-\left(\frac{26k^2+13k+4s+2}{2}\right)^2.
\end{array}$$
Since $$
\displaystyle-\frac{26k^2+13k+4s+2}{2}<-k(k-1)\leq a\leq 0,$$ we have
\begin{align*}
&\det((4k+1)I_6-M)\geq \det((4k+1)I_6-M)|_{a=-k(k-1)} \\
=&\det\left([(4k+1)I_6-M]_{a=-k(k-1)}\right) \\
=&(216k^4-120k^3s+16k^2s^2)+(252k^3-130k^2s+30ks_1^2-4s^3) \\
&+(105k^2-48ks+9s^2)+(18k-6s)+1  \\
=& (3k-s)(72k^3-16k^2s)+\left((3k-s)\left(84k^2-\frac{46}{3}ks+\frac{44}{9}s^2\right)+\frac{8}{9}s^3\right) \\
& +\left((3k-s)\left(35k-\frac{13}{3}s\right)+\frac{14}{3}s^2\right)+6(3k-s)+1 > 0.
\end{align*}
 Note that $\rho_r(M)$ is the largest zero of $\det(xI_6-M)$. To show that $\rho_r(M)<4k+1$, it suffices to show that the second largest zero of $\det(xI_6-M)$ is less then $4k+1$.
By Lemma~\ref{lem2.9}, the second largest zero  of $\det(xI_6-M)$ is $\max\{\alpha_1+\beta_2,\beta_1+\alpha_2\}$, where $\alpha_1\geq \alpha_2$ and $\beta_1\geq \beta_2\geq \beta_3$ are zeros of $\det(xI_2-M_1)$ and $\det(xI_3-M_2)$ respectively.
It is immediate to find $\alpha_2<0<\alpha_1\leq c= 2k+1.$ Since the characteristic polynomial $\det(xI_3-M_2)$ of $M_2$ is   $$h(x)=x^3-(2k-1)x^2+(-5k-1+s+a)x-k(2a+s-3k-1),$$
 $h(0)=-k(2a+s-3k-1)> 0$, $h(2k)=k(-3k-1+s)<0$, and $h(2k+1)=s(k+1)+(k+1)^2+a>0$. Therefore $\beta_1, \beta_2$, $\beta_3$ satisfy   $\beta_3<0<\beta_2<2k<\beta_1<2k+1$.
Hence $\max\{\alpha_1+\beta_2,\beta_1+\alpha_2\}<4k+1,$ and the proof is completed.  \qed

\section*{Acknowledgments}
This research is supported by the National Science and Technology Council of Taiwan R.O.C. under the projects NSTC 113-2115-M-153-001-MY3 and NSTC 113-2115-M-A49-
011-MY2.


\begin{thebibliography}{20}\frenchspacing

\bibitem{abchrss:08}
M. Aouchiche, F. K. Bell, D. Cvectovi\'{c}, P. Hansen, P. Rowlinson, S. Simi\'{c}, and D. Stevanovi\'{c}, Variable neighborhood search for extremal graphs. 16. Some conjectures related to the largest eigenvalue of a graph, {\it Eur. J. Oper. Res.}, 191 (2008), 661--676.

\bibitem{ah:13}
M. Aouchiche, P. Hansen, A survey of Nordhaus-Gaddum type relations,
{\it Disctrete Appl. Math.}, 161 (2013), 466--546.

\bibitem{Brou}
A. E. Brouwer and W. H. Haemers, {\it Spectral of graphs}, Springer, 2012.

\bibitem{cw:23}
Y.-J. Cheng and C.-w. Weng, A matrix realization of spectral bounds, {\it J. Comb. Theory Ser. B}, 
174 (2025), 1--27.


\bibitem{c:09}
P. Csikv\'{a}ri, On a conjecture of V. Nikiforov,
{\it Discrete Math.}, 309 (2009), 4522--4526.


\bibitem{dz:13}
X. Duan, B. Zhou, Sharp bounds on the spectral radius of a nonnegative matrix,
{\it Linear Algebra Appl.}, 439 (2013), 2961--2970.


\bibitem{g:93}
C. D. Godsil, {\it Algebraic combinatorics}, Chapman and Hall Mathematics
Series, Chapman \& Hall, New York, 1993.

\bibitem{ng:56}
E. A. Nordhaus, J. Gaddum, On complementary graphs,
{\it Amer. Math. Monthly}, 65 (1956), 175--177.

\bibitem{Horn}
 R. A. Horn and C. R. Johnson, {\it Topics in matrix analysis}, Cambrigde University Press, 1991.



\bibitem{lw:13}
C.-A. Liu and C.-W. Weng, Spectral radius and degree sequence of a graph,
{\it Linear Algebra Appl.}, 438 (2013), 3511--3515.

\bibitem{l:24}
L. Liu, Graph limits and spectral extremal problems for graphs, {\it SIAM J. Discrete Math.}, 38 (2024), 10.1137/22M1508807.


\bibitem{n:07}
V. Nikiforov, Eigenvalue problems of Nordhaus-Gaddum type,
{\it Discrete Math.}, 307 (2007), 774--780.


\bibitem{s:07}
D. Stevanovi\'{c}, Research problems from the Aveiro workshop on graph spectra,
{\it Linear Algebra Appl.}, 423 (2007), 172--181.

\bibitem{t:11}
T. Terpai, Proof of a conjecture of V. Nikiforov,
\it{ Combinatorica}, 31 (2011), 739--754.

\end{thebibliography}
\end{document}